\title{$\mathbb{Q}_\ell$-cohomology projective planes and Enriques \\ surfaces in characteristic two}
\author{Matthias Sch\"utt}
\institution{Institut f\"ur Algebraische Geometrie, Leibniz Universit\"at Hannover, Welfengarten 1, 30167 Hannover, Germany \\
Riemann Center for Geometry and Physics, Leibniz Universit\"at Hannover, Appelstrasse 2, 30167 Hannover, Germany}\\
\email{schuett@math.uni-hannover.de}}
\date{\vspace{-5ex}} 
\journal{\'Epijournal de G\'eom\'etrie Alg\'ebrique} 
\newdimen\origiwspc
\font
\setlist[enumerate,1]{label={\rm (\arabic*)}}
\newcommand{\C} {\mathbb{C}}
\newcommand{\Q} {\mathbb{Q}}
\newcommand{\F}{\mathbb{F}}
\newcommand{\Z}{\mathbb{Z}}
\newcommand{\PP}{\mathbb{P}}
\newcommand{\NS}{\mathop{\rm NS}}
\newcommand{\Num}{\mathop{\rm Num}}
\newcommand{\MW}{\mathop{\rm MW}}
\newcommand{\Aut}{\mathop{\rm Aut}}
\newcommand{\Pic}{\mathop{\rm Pic}}
\newcommand{\id}{\mathop{\rm id}}
\newcommand{\Jac}{\mathop{\rm Jac}}
\newtheorem{Theorem}{Theorem}[section]
\newtheorem{Proposition}[Theorem]{Proposition}
\newtheorem{Lemma}[Theorem]{Lemma}
\newtheorem{Corollary}[Theorem]{Corollary}
\newtheorem{tremark}[Theorem]{Remark}
\newenvironment{Remark}{\begin{tremark} \rm}{\end{tremark}}
\newtheorem{Example}[Theorem]{Example}
\newtheorem{tobservation}[Theorem]{Observation}
\newenvironment{Observation}{\begin{tobservation} \rm}{\end{tobservation}}
\newtheorem{tconvention}[Theorem]{Convention}
\newenvironment{Convention}{\begin{tconvention} \rm}{\end{tconvention}}
\newtheorem{tsummary}[Theorem]{Summary}
\newenvironment{Summary}{\begin{tsummary} \rm}{\end{tsummary}}
\begin{document}


\maketitle


\dedication{Dedicated to JongHae Keum on the occasion of his 60th birthday}

\begin{prelims}

\vspace{-0.55cm}

\def\abstractname{Abstract}
\abstract{We classify singular Enriques surfaces in characteristic two
supporting a rank nine configuration of smooth rational curves.
They come in one-dimensional families defined over the prime field,
paralleling the situation in other characteristics, but featuring novel aspects.
Contracting the given rational curves, one can derive
algebraic surfaces
with isolated ADE-singularities and  trivial canonical bundle
whose   $\Q_\ell$-cohomology equals that of a projective plane.
Similar existence results are developed for classical Enriques surfaces.
We also work out an application to integral models of Enriques surfaces (and K3 surfaces).}

\keywords{Enriques surface; characteristic 2; smooth rational curve; elliptic fibration; fake projective plane}

\MSCclass{14J28; 14J27}


\languagesection{Fran\c{c}ais}{%

\vspace{-0.05cm}
\textbf{Titre. Plans projectifs de $\Q_\ell$-cohomologie et surfaces d'Enriques en caract\'e\-ristique 2} \commentskip \textbf{R\'esum\'e.} Nous classifions les surfaces d'Enriques singuli\`eres en caract\'eristique 2 qui supportent une configuration de courbes rationnelles lisses de rang 9. Elles viennent en familles unidimensionnelles d\'efinies sur le corps premier, comme dans la situation des autres caract\'eristiques, mais tout en faisant appara\^{\i}tre de nouveaux aspects. En contractant lesdites courbes rationnelles, on obtient des surfaces alg\'ebriques avec singularit\'es ADE isol\'ees et fibr\'e canonique trivial, dont la cohomologie \`a coefficients dans $\Q_\ell$ co\"{\i}ncide avec celle du plan projectif. Des r\'esultats d'existence similaires sont \'etablis pour les surfaces d'Enriques classiques. Nous mettons \'egalement au point une application aux mod\`eles entiers de surfaces d'Enriques (et de surfaces K3).}

\end{prelims}


\newpage

\setcounter{tocdepth}{1} \tableofcontents

 \section{Introduction}
 \label{s:intro}

 The study of Enriques surfaces forms one of the centerpieces
 for the Enriques--Kodaira classification of algebraic surfaces.
 The subtleties and special properties which they already display over the complex numbers
 are further augmented in characteristic two
 where additional intriguing features arise.
 Despite great recent progress (see \cite{KKM}, \cite{Liedtke}),
 there are still many open problems,
 both on the abstract and the explicit side.
 This paper contributes to both sides
by considering Enriques surfaces
which carry a rank nine configuration
of smooth rational curves.
Contracting the curves, we obtain
a remarkable object:
a singular normal surface with the same \'etale cohomology as projective space $\PP^2$.

  Enriques surfaces admitting such configurations of smooth rational curves
  have previously been studied successfully in \cite{HKO} and \cite{S-Q-hom}
over $\C$ and in \cite{S-Q_l} over fields of odd characteristic.
In characteristic two, however, the theory of Enriques surfaces features many subtleties and surprises
which will also play a lead role in this paper.
Hence it is quite remarkable that some of the techniques from \cite{S-Q_l} can be adapted
 for one essential class of Enriques surfaces in characteristic two,
 namely the so-called singular Enriques surfaces
 whose Picard scheme equals the group scheme $\mu_2$.
 Our first main result parallels those from odd characteristic in quality
 although they are quite different in quantity.

\begin{Theorem}
\label{thm}
There are exactly 19 configurations $R$ of smooth rational curves
of rank $9$
realized on singular Enriques surfaces in characteristic two:
\begin{eqnarray*}
\label{eq:list}
  A_9 ,  A_8 + A_1 ,  A_7 + A_2 ,  A_7 + 2A_1 ,   A_6 + A_2 + A_1 ,    A_5 + A_4 ,  A_5 + A_3 + A_1 , \nonumber \\ \nonumber
  A_5 + 2A_2 , 2A_4 + A_1 ,   3A_3 ,  A_3 + 3A_2 ,
 D_9 ,  D_8 + A_1 ,   D_6 + A_3 ,  \nonumber \\ \nonumber
 D_5 + A_4 ,
 E_8 + A_1 ,  E_7 + A_2 ,  E_6 + A_3 ,  E_6 + A_2 + A_1.
\end{eqnarray*}
For each the following hold:
\begin{enumerate}
\item[(i)]
the root types are supported on 1-dimensional families of Enriques surfaces;
\item[(ii)]
the moduli spaces are irreducible
except for $R=A_8+A_1$ and $A_5+2A_2$ both of which admit two different moduli components;
\item[(iii)]
each family has rational base and is defined over $\F_2$;
\item[(iv)]
each family can be parametrized explicitly,
see Summary \ref{sum2} and Table \ref{T2}.
\end{enumerate}
\end{Theorem}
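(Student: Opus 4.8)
The plan is to reduce Theorem~\ref{thm} to a finite lattice enumeration producing the list of candidates, followed by an explicit geometric realization of each candidate which simultaneously yields the quantitative statements (i)--(iv). A rank $9$ configuration $R$ of smooth rational curves on an Enriques surface $X$ spans a root sublattice of the Enriques lattice $\Num(X)\cong U\oplus E_8$ (necessarily negative definite and of ADE type), whose orthogonal complement there is a rank one even lattice $\langle 2d\rangle$ with $d\ge 1$. Using Nikulin's theory of embeddings into the unique even unimodular lattice of signature $(1,9)$, one lists the finitely many rank $9$ root lattices admitting such an embedding, recording the invariant $d$. A second constraint, specific to characteristic two, comes from the genus one fibrations attached to these configurations: on a \emph{singular} Enriques surface the admissible multiple fibres, additive fibres and the question of (quasi-)ellipticity are restricted, so only certain fibre configurations can occur and the possible $R$ are cut down further. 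Intersecting the two constraints should leave exactly the $20$ root types.

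For each of the $20$ candidates I would then exhibit an explicit family. The idea is to build over $\F_2$ a genus one fibration realizing the vertical part of $R$ by prescribing the Kodaira types of its singular fibres, to pass to the Enriques quotient in such a way that $\Pic^\tau\cong\mu_2$ --- so that the surface is singular rather than classical or supersingular --- and to check that the remaining $(-2)$-curves of $R$, whether sections, bisections or components of the multiple fibre, are present for generic parameter. After normalization each model carries a single free parameter, matching the expected dimension $10-9=1$; this produces a one-dimensional family with rational base defined over the prime field, which proves (i), (iii) and the explicit parametrization (iv) and fills in Summary~\ref{sum2} and Table~\ref{T2}. Irreducibility in (ii) is then visible from the rational parameter space, while for $R=A_8+A_1$ and $R=A_5+2A_2$ one finds two inequivalent realizations --- two non-conjugate embeddings of $R$ into $\Num(X)$, or two genuinely different elliptic pencils --- accounting for the two moduli components.

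The main obstacle is the realization step in characteristic two, where the usual tools degrade: Kummer-type double covers give way to inseparable or Artin--Schreier constructions, wild ramification and additive fibres upset the Euler number bookkeeping, and quasi-elliptic pencils must be identified and excluded. For generic parameter one must verify that the surface really is a singular Enriques surface, that the nine curves are distinct, smooth and rational, and that no extra curve forces the family to drop dimension. Tightly bound up with this is the exhaustiveness of the list: showing that the rank $9$ root lattices surviving the lattice enumeration but absent from the list really cannot be realized by smooth rational curves on a singular Enriques surface in characteristic two rests on exactly this special geometry, and is where the argument needs the most care.
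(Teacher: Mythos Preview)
Your outline has the right two-phase shape (eliminate, then realize), but it misses the paper's central engine on both sides, and one of your explanations is incorrect.

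\textbf{Elimination.} You invoke ``constraints specific to characteristic two'' on multiple fibres and additive fibres, but do not identify the actual mechanism. The paper shows that every rank $9$ configuration $R$ forces an isotropic vector $E$ with $R_0 = E^\perp\cap R$ of rank $8$, so the Jacobian of $|2E|$ is an \emph{extremal} rational elliptic surface. Lang's classification of these in characteristic two, together with a combinatorial criterion (Lemma~\ref{lem:<2}) pinning down the fibre types from $R_0$ and the bisection $B$, is what kills the $18$ excluded types. Several cases (notably $4A_2+A_1$ and $A_3+6A_1$) need bespoke arguments. Your proposal does not touch any of this, and without it the exhaustiveness step is just an assertion. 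Also, on a \emph{singular} Enriques surface every genus one fibration is genuinely elliptic, so your worry about excluding quasi-elliptic pencils is misplaced here (it matters only in the classical case).

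\textbf{Realization.} You propose to ``build a genus one fibration over $\F_2$ \ldots\ and pass to the Enriques quotient with $\Pic^\tau\cong\mu_2$''. The paper does the opposite: it starts from the Enriques surface, goes up to the K3 cover $X$, and proves the key decomposition $\tau = t_P\circ\imath$ (Proposition~\ref{prop:imath}), whence $X/\imath = \Jac(f)$. This exhibits $X$ as a quadratic base change of an explicit extremal rational elliptic surface, and the bisection $B$ becomes a section $P$ which is either $2$-torsion (giving the families of Summary~\ref{sum2}) or anti-invariant and hence computable on the quadratic twist $X'$ (giving Table~\ref{T2}). Without this Kond\=o-type decomposition your ``explicit parametrization'' step has no content; it is precisely this tool that delivers (i), (iii), (iv).

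\textbf{The two components.} For $A_5+2A_2$ there is a \emph{unique} primitive embedding into $U\oplus E_8$ (primitive closure $E_7+A_2$), so ``two non-conjugate embeddings'' is wrong there. The two families arise from whether the section $P$ on the K3 cover is $2$-torsion or not (equivalently, whether the $I_6$ fibre is the ramified multiple fibre), and the paper separates them by comparing $\mathrm{disc}\,\NS$ of the generic K3 covers (Lemma~\ref{lem:comp}). Your alternative ``two genuinely different elliptic pencils'' does not capture this either: it is the same pencil on both families.
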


\begin{Remark}
\label{rem:KK}
Thirteen of the types in Theorem \ref{thm} are supported on singular Enriques  surfaces
with finite automorphism
group in characteristic two, as classified in \cite{KK}, \cite{Martin}
(compare the approach over $\C$ in \cite{HKO}).
This settles the existence part of Theorem \ref{thm} for these types,
but not the stated properties.
\end{Remark}

%
%

%
%

We can also treat the other main class of Enriques surfaces in characteristic two,
the so-called classical Enriques surfaces
 whose Picard scheme equals  $\Z/2\Z$.
 Here we derive the following  existence result:

 \begin{Theorem}
 \label{thm2}
 There are exactly 37 configurations $R$ of smooth rational curves
of rank $9$
realized on classical Enriques surfaces in characteristic two.
More precisely, every rank $9$ root type $R$ embedding
into the even hyperbolic unimodular lattice $U+E_8$ of rank 10
is realized except  for $R=4A_2+A_1$.
 \end{Theorem}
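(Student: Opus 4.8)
The plan is to build on the machinery established for singular Enriques surfaces (Theorem 1.1) and transport it to the classical case, exploiting the fact that the numerical Néron–Severi lattice $\Num$ of any Enriques surface in characteristic two is the even unimodular lattice $U+E_8$ of rank $10$. First I would recall the lattice-theoretic constraint: a rank $9$ configuration $R$ of smooth rational curves on an Enriques surface $Y$ spans a negative-definite root sublattice of $\Num(Y)$, so $R$ must embed into $U+E_8$. Conversely, one checks by a finite enumeration (essentially the same combinatorial list that underlies \cite{S-Q-hom} and \cite{S-Q_l}) that there are exactly $37$ root types of rank $9$ admitting such an embedding; this produces the candidate list. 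The content of the theorem is the realizability of all but at most one of these.

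For the realization step, the strategy is to produce, for each root type $R$, an explicit classical Enriques surface carrying the configuration. Here I would proceed via elliptic fibrations: a rank $9$ root configuration typically arises from the reducible fibers (plus possibly a section or torsion) of an elliptic fibration $Y\to\PP^1$, and classical Enriques surfaces in characteristic two always carry such fibrations with specified behavior at the two half-fibers (one additive of type described by Bombieri–Mumford, one which for classical surfaces is multiplicative or additive). I would match each root type against a fiber configuration compatible with the Euler number constraint and the half-fiber structure, then write down Weierstrass or quasi-elliptic normal forms over $\F_2$, verifying that the resulting surface is a classical Enriques surface and that the rational curves are smooth and give precisely $R$. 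Cross-characteristic specialization from the odd-characteristic families of \cite{S-Q_l} — or from the $\C$-families of \cite{S-Q-hom} — would serve as a guide for which configurations to expect and as a sanity check, but the characteristic two models must be constructed and checked directly because of the wild ramification and the degeneracy of the usual discriminant arguments.

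The main obstacle I expect is precisely the exceptional case $R=4A_2+A_1$, and more generally the configurations with many $A_2$'s or with an $E_8$/$D_9$ summand, where the half-fiber of the elliptic fibration interacts awkwardly with the $A_2$-fibers (type $\mathrm{IV}$ or $\tilde A_2$) in characteristic two. Concretely, realizing $4A_2+A_1$ would seem to require an elliptic fibration on a classical Enriques surface with four fibers of type $\mathrm{IV}$ and one of type $\mathrm{I}_2$ (or $\mathrm{III}$), together with a section meeting them appropriately; the difficulty is that the additive-fiber contribution to the Euler number is inflated by wild ramification, and the constraints on the two half-fibers may be incompatible with such a fiber configuration — which is exactly why the theorem only claims realizability "except possibly" for this type. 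For all other $36$ types I would exhibit the family explicitly and verify the root type by a direct computation of intersection numbers, mirroring the case-by-case analysis carried out for the singular surfaces; the bulk of the work is this finite but lengthy verification, which I would organize by the type of the underlying genus-one fibration (elliptic versus quasi-elliptic) and record in a table paralleling Table \ref{T2}.
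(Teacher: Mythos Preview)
Your plan has a structural gap: the machinery from the singular case does \emph{not} transport to classical Enriques surfaces, because its core ingredient---the smooth K3 double cover and the base change/quadratic twist construction of Sections~\ref{s:bc}--\ref{s:twist}---breaks down. For a classical Enriques surface the canonical cover is only K3-like; it need not even be normal, so there is no section $P$ on a K3 surface to work with, and the decomposition $\tau=t_P\circ\imath$ has no analogue. Your proposal to ``write down Weierstrass or quasi-elliptic normal forms over $\F_2$'' and then verify the surface is Enriques is also problematic: a Weierstrass form gives a rational elliptic surface (the Jacobian), not an Enriques surface; to produce the latter you must pass to a torsor, and---as the paper notes explicitly---the torsor construction gives no control over $(-2)$-curves beyond fiber components, so you cannot verify the bisection part of the configuration this way. (Incidentally, there are $38$ rank~$9$ root types embedding in $U+E_8$, not $37$; the theorem asserts that $37$ of these are realized.)

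The paper takes a completely different route, avoiding any direct construction. For $30$ of the root types it simply reads them off the curve diagrams of the already-classified classical Enriques surfaces with finite automorphism group (Katsura--Kond\=o--Martin \cite{KKM} and Katsura--Kond\=o \cite{KK}); this is a table lookup, not a construction. For the remaining $7$ types it uses a trick due to Dolgachev--Liedtke specific to quasi-elliptic fibrations: here the cuspidal curve is automatically a smooth rational bisection, so one \emph{does} retain control of one extra $(-2)$-curve beyond the fibers when passing to a torsor. By choosing which reducible fibers ramify, one arranges the cuspidal curve to meet the fibers so as to complete the desired root type. The exceptional case $4A_2+A_1$ is left open not because of wild ramification on additive fibers as you suggest, but because neither of these two off-the-shelf sources happens to produce it.
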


All the root types in question can be found in the first column of Table \ref{T1}.
Theorem \ref{thm2}
largely builds on deep work of Dolgachev--Liedtke (unpublished as of yet)
and Katsura--Kond\=o--Martin \cite{KKM}
which provide a plentitude of constructions and examples
of classical Enriques surfaces with prescribed configurations of smooth rational curves.
At present our methods do not yield enough information
to fully classify the moduli of the classical Enriques surfaces from Theorem \ref{thm2}
(whose  dimensions do in fact vary, largely due to the presence of quasi-elliptic fibrations);
neither do they apply directly to the remaining class of supersingular Enriques surfaces
(compare Remark \ref{rem:ss}).
Yet we can  rule out the root type $R=4A_2+A_1$
and thus prove the following general result:

\begin{Proposition}
\label{prop}
No Enriques surface in any characteristic supports the root type $R=4A_2+A_1$.
\end{Proposition}

Previously, this was proved in characteristic zero in \cite{S-Q-hom},
in odd characteristic in \cite{S-Q_l} and for singular Enriques surfaces as part of
the original proof of Theorem \ref{thm}.
Here we will exhibit a unified geometric argument for all characteristics $\neq 3$ based on triple coverings.

As arithmetic application, we construct integral models of Enriques surfaces (and of K3 surfaces),
i.e.~over integer rings of number fields of small degree (including a few real quadratic fields):

\begin{Theorem}
\label{thm40}
There are  Enriques surfaces over the integer rings of number fields
of  degree $d_0$ supporting maximal root types $R$ as follows:
$$
\begin{array}{c||c|c|c|c|c|c}
R & E_8+A_1 & E_7+A_2  & E_6+A_3 & A_8+A_1& A_5+A_4 & A_6+A_2+A_1     \\
& D_9  && A_9 & & D_5+A_4\\
\hline
d_0 & 2 & 3 & 4 & 6 & 7 & 9\\
\end{array}
$$
\end{Theorem}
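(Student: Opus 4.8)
\emph{Proof proposal.} The plan is to realize, for each of the eight root types $R$ occurring in the table, an explicit Enriques surface over a number ring $\OO_K$ with $[K:\Q]=d_0$ carrying a configuration of smooth rational curves of type $R$; this is done by specializing the explicit one‑parameter families from Theorem~\ref{thm} (together with their characteristic‑zero and odd‑characteristic counterparts from \cite{HKO,S-Q-hom,S-Q_l}) at a suitably chosen algebraic value of the parameter. Each such family is presented by an elliptic fibration whose Weierstrass coefficients are polynomials in a parameter $t$ over a small ring; setting $t=t_0\in\OO_K$ and passing to the relatively minimal elliptic model over $\OO_K$ produces a scheme whose generic fibre is the desired Enriques surface, and the whole point is to arrange $t_0$ so that \emph{every} fibre over $\OO_K$ is an Enriques surface carrying a configuration of type $R$.

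Concretely I would proceed as follows. First, fix the explicit models from Summary~\ref{sum2} and Table~\ref{T2} once and for all, together with matching characteristic‑zero models; normalizing by $\mathrm{PGL}_2$ so that the reducible fibres of the elliptic pencil (at most three in each of these cases, by the Euler number bookkeeping $e=12$) lie over $0,1,\infty$ makes their positions rational, leaving only the internal Galois action on the components of these fibres and on the Mordell--Weil sections realizing the $A_1$‑ and $A_2$‑summands to be controlled. Second, determine the finite set of forbidden parameter values: over $\Q$ these are the zeros of the family's discriminant, where the root type degenerates, and at each finite place they are the finitely many residues excluded by the relevant classification — at the primes above $2$, precisely those lying outside the good locus of the singular Enriques family of Theorem~\ref{thm}. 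Third, choose $t_0$ algebraic of least possible degree so that (a) its reduction avoids every forbidden value at every finite place of $K$, and (b) the components of the reducible fibres together with the sections needed for $R$ become $K$‑rational; balancing these rationality requirements against the constraints imposed by the prime $2$ is what yields the precise degrees $d_0=2,3,4,6,7,9$. Finally, check that the resulting $\OO_K$‑scheme is smooth and proper with Enriques fibres and that the configuration of type $R$ spreads out over $\OO_K$; away from $2$ this is a routine verification on the Weierstrass data.

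The principal difficulty is the prime $2$. There ``good reduction'' is subtle: the naive integral Weierstrass model need not reduce to the intended Enriques surface (a change of model, a resolution, or a normalization may be required), the canonical cover is no longer étale, and the reduction can in principle be classical, singular, or supersingular, with quasi‑elliptic degenerations and wild ramification intervening in the analysis of the fibres. The strategy is to force the reduction of $t_0$ modulo each prime above $2$ to land in the good locus of the singular family of Theorem~\ref{thm} — for which the explicit equations and the precise form of the configuration are available — and then to confirm, by the same local computations that underlie that theorem, that the $2$‑adic model is smooth and proper with the configuration $R$ intact. Together with the rationality conditions from the third step, this is what pushes the degree of $K$ up to $d_0$; since only existence is asserted, it then suffices to exhibit one admissible $t_0$ and the associated $\OO_K$‑model in each of the eight cases, which I would verify case by case from the explicit parametrizations.
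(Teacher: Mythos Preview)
Your broad strategy—specialize the explicit one-parameter families at an algebraic value of the parameter chosen so that nothing degenerates at any place—is correct and matches the paper's approach. However, you misidentify where the degrees $d_0$ come from, and this is a genuine gap.

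The paper first establishes (Theorem~\ref{prop:int-fam}) that each of the nine families is already defined over $\Z$: the Jacobians $X_{321}, X_{6321}, X_{8211},\dots$ admit integral Weierstrass models, the quadratic base change is written down over $\Z$, and the section $P$ is given by polynomials over $\Z$. Hence there is no separate difficulty at the prime $2$, no need for special $2$-adic resolutions, and no rationality condition on fibre components or sections: all of that is built into the integral family from the start. Your conditions~(b) and your lengthy discussion of the prime $2$ are therefore red herrings; they do not contribute to $d_0$.

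What does determine $d_0$ is purely your condition~(a), but you do not explain how to achieve it, and the mechanism is the whole point. The second ramification point of the base change must avoid a finite set of bad values $a_1,\dots,a_{d_0}$ (the locations of the singular fibres, including those that collide in small characteristic). ``Avoiding $a_i$ at every finite place'' means precisely that $\mu-a_i$ is a unit in $\OO_K$. The paper's trick is to impose the single equation
\[
\prod_{i=1}^{d_0}(\mu-a_i)=\pm 1,
\]
which forces each factor to be a unit and simultaneously cuts out a number field of degree exactly $d_0$. For instance, for $E_8+A_1$ (Jacobian $X_{321}$, bad values $0$ and $1/64$) this reads $\mu(\mu-256)=\pm 1$, giving $d_0=2$; for $A_9$ (Jacobian $X_{8211}$, four bad values) one gets $\mu(\mu+4)(\mu-4)(\mu+16)=\pm 1$, giving $d_0=4$; and so on. Without this device your proposal does not produce the specific degrees in the table, nor does it explain why they are the right ones.
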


In Corollary  \ref{cor:dense}, we also show that integral Enriques surfaces
supporting root types from Theorem \ref{thm40}
(in number fields of fixed degree $>d_0$)
lie dense in the underlying complex moduli spaces.

The paper is organized as follows.
After reviewing some relevant theory around genus one fibrations,
we will right away jump at ruling out many root types
to occur on singular Enriques surfaces (a good portion of them existing in characteristic zero in fact).
The existence and uniqueness part of Theorem \ref{thm}
builds on a rather explicit base change technique
which abstractly goes back to Kond\=o over $\C$ (\cite{Kondo-Enriques})
and is extended to characteristic two in Sections \ref{s:bc}, \ref{s:twist}.
By way of this technique, all Enriques surfaces and root types from Theorem \ref{thm}
fall into two kinds of families which we work out explicitly in the subsequent sections.
In Section \ref{s:classical} we turn to classical Enriques surfaces and prove Theorem \ref{thm2}
before covering Proposition \ref{prop} in Section \ref{s:triple}.
The integral models for Theorem \ref{thm3} and beyond
will be worked out  in Section \ref{s:integral}.
Throughout, we will illustrate the methods and ideas involved
with a series of instructive examples related to the root type $R=A_5+2A_2$
(which is not realized on any singular Enriques surface
with finite automorphism group in characteristic two).


%

\begin{Convention}
All root lattices of type $A_n, D_k, E_l$ are assumed to be negative-definite.
\end{Convention}

\section{Genus one fibrations}
\label{s:elliptic}

Regardless of the characteristic, an Enriques surface can be defined to be a smooth minimal algebraic surface $S$
such that
\[
K_S \equiv 0, \;\;\; b_2(S) = 10
\]
(where the Enriques--Kodaira classification, extended by Bombieri and Mumford in
\cite{BM1}, \cite{BM2}, \cite{Mumford-surf}, enters implicitly).
Here $\equiv$ indicates numerical equivalence and $b_i(S)$ denotes the $i$th Betti number of $S$
for  the $\ell$-adic \'etale cohomology
(with some auxiliary prime $\ell\neq p$).
Moreover, one has
\[
\chi(\mathcal O_S)=1, \;\;\; b_1(S)=0.
\]
From now on we assume that we work over an algebraically closed field $K$ of characteristic $p=2$.
Then Enriques surfaces fall into three cases, depending on their Picard scheme $\Pic^\tau(S)$
which runs through all group schemes of length two
(or, equivalently, depending on the action of the absolute Frobenius morphism on $H^1(S,\mathcal O_S)$
which can be zero- or one-dimensional):
$$
\begin{array}{lcl}
\text{classical:} && \Pic^\tau(S) = \Z/2\Z\\
\text{singular:} && \Pic^\tau(S) = \mu_2\\
\text{supersingular:} && \Pic^\tau(S) = \alpha_2.
\end{array}
$$
All three cases are unified by the way in which they are governed by genus one fibrations,
i.e.~morphisms
\begin{eqnarray}
\label{eq:to}
f: \;\;\; S \to \PP^1
\end{eqnarray}
whose general fiber are curves of arithmetic genus one.
Necessarily, there are multiple fibers involved (of multiplicity two),
but their configuration is genuinely different from the picture in all other characteristics
by \cite[Thms 5.7.5, 5.7.6]{CD}:
\begin{table}[ht!]
\centering
\begin{tabular}{lcl}
classical: && two multiple fibers, both smooth ordinary or of additive type\\
singular: && one multiple fiber, smooth ordinary or of multiplicative type\\
supersingular: && one multiple fiber, smooth supersingular or of additive type.
\end{tabular}
\end{table}

Abstractly, the fibration is encoded in
the lattice
\[
\Num(S) = \Pic(S)/\equiv \;\; \cong U+E_8
\]
where $U$ denote the hyperbolic plane (cf.~\cite{Illusie}). 
Precisely,  the genus one fibration \eqref{eq:to} gives rise to a primitive isotropic effective vector
\[
0< E\in\Num(S) 
\]
by way of the support of a multiple fiber;
by construction, the linear system $|2E|$ has no base points and thus coincides with \eqref{eq:to}.
Conversely, given any non-trivial isotropic vector $E\in\Num(S)$,
it
\begin{itemize}
\item
is effective or anti-effective by Riemann--Roch (so say $E>0$),
\item
can be made primitive by dividing by some appropriate constant, and
\item
one can eliminate the base locus of the linear system $|2E|$
by successive reflections in the classes of smooth rational curves.
\end{itemize}
Recall that the resulting effective isotropic divisor $E'$ is often called a half-pencil or half-fiber.

We emphasize that smooth rational curves are a rather delicate matter on Enriques surfaces
since, unlike on K3 surfaces, $(-2)$-vectors in $\Num(S)$ need not be effective neither anti-effective.
Nonetheless, the above approach will be the key to our findings
since 
we can set it up in such a way
that we retain enough control over the smooth rational curves
provided by the rank $9$ configuration (as explored in \cite{S-Q-hom}, \cite{S-Q_l}).

Instead of going into the details of the gluing techniques and discriminant forms from \cite{S-Q-hom}, \cite{S-Q_l},
we illustrate our methods and ideas with the following instructive example
which will keep on reappearing throughout this paper.
We will use dual vectors following standard notation (cf.~\cite{S-Q-hom}).
E.g., we number the usual generators of the root lattices $A_n$ by $a_1,\hdots, a_n$
and consider the dual vectors $a_i^\vee\in A_n^\vee$ of square $(a_i^\vee)^2=-i(n-i)/n$,
and similarly for $D_n$ and $E_n$.

\begin{Example}[$R=A_5+2A_2$] \rm
\label{ex1}
Assume that the Enriques surface $S$ contains a configuration of smooth rational curves
of type $R=A_5+2A_2$.
Up to isometry,
$R$ admits a unique embedding into $\Num(S)$
with primitive closure
\[
R' = (R\otimes \Q)\cap \Num(S) = E_7 + A_2
\]
and orthogonal complement
\[
R^\perp = \Z H\subset\Num(S), \;\;\; H^2=6.
\]
There are several ways to exhibit isotropic vectors $E\in\Num(S)$.
We highlight two of them:
\begin{enumerate}
\item[\rm (1)]
$a_1^\vee\in A_2^\vee \;\;\;  \Longrightarrow \;\;\; E=a_1^\vee + H/3, \;\;\; E^\perp\cap R = A_5+A_2+A_1.$
\item[\rm (2)]
$e_7^\vee\in E_7^\vee\subset A_5^\vee \;\;\; \Longrightarrow \;\;\; E=e_7^\vee + H/2, \;\;\; E^\perp\cap R = 4A_2.$
\end{enumerate}
Let us point out two properties which will become important momentarily:
in either case, $E$ is primitive (since $E.a_1=1$ resp.~$E.e_7=1$ by construction),
while leaving a large root lattice inside $R$ perpendicular.
Up to changing sign and subtracting the base locus, $|2E|$ will thus induce a genus one fibration on $S$.
\end{Example}

\section{Jacobian vs. extremal genus one fibrations}

It is exactly the approach of Example \ref{ex1}
which we will now pursue in order to get an idea of the Jacobian of  $|2E|$. 
This is always a rational surface endowed with a genus one fibration,
i.e.~either elliptic or quasi-elliptic.
We claim that we can always arrange for $\Jac(f)$ to be rather special,
namely to have finite Mordell--Weil group.
Equivalently, by the Shioda--Tate formula, the fibers support a root lattice of rank $8$.
Following \cite{MP}, such rational elliptic surfaces are called \emph{extremal}.
Note that for quasi-elliptic surfaces, finite Mordell--Weil group is automatic by \cite[\S 4]{RuS}.

\begin{Proposition}
 \label{prop:isotropic}
Let $S$ be an Enriques  surface admitting a configuration $R$ of smooth rational curves of rank $9$.
Then $S$ has a genus one fibration \eqref{eq:to} whose Jacobian is extremal.
\end{Proposition}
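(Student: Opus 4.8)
The plan is to exhibit, for each of the relevant rank~$9$ root types $R$, an isotropic class $E\in\Num(S)$ whose orthogonal complement in $R$ (equivalently, in $\Num(S)$) contains a root lattice of rank $8$; then the half-pencil $E'$ obtained from $E$ by reflecting away the base locus of $|2E|$ gives a genus one fibration $f$ on $S$ whose trivial lattice already has rank $10$, forcing $\Jac(f)$ to be extremal (elliptic) or, if quasi-elliptic, extremal by \cite[\S4]{RuS}. This is precisely the mechanism illustrated in Example~\ref{ex1}, where both choices of $E$ leave a rank~$8$ root lattice ($A_5+A_2+A_1$ of rank $8$, resp.\ $4A_2$ of rank $8$) perpendicular. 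So the proposition is a uniform packaging of that example.

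Concretely, I would argue as follows. Fix an embedding $R\hookrightarrow\Num(S)\cong U+E_8$ and let $R'$ be its primitive closure, $R^\perp=\Z H$ with $H^2=2k>0$ the (rank~$1$) orthogonal complement. For a suitable fundamental/dual vector $w$ in one of the summands of $R'$, the class $E=w+H/m$ (with $m$ chosen so that $E\in\Num(S)$ and $E$ primitive, exactly as in the two displayed cases of Example~\ref{ex1}) is isotropic, meets some component of $R$ with multiplicity one, and has $E^\perp\cap R$ of rank~$8$. The key numerical point to check is that $E^\perp\cap R$ is genuinely a \emph{root} lattice of rank~$8$ (no isotropic part), so that after clearing the base locus of $|2E|$ by reflections in smooth rational curves — which preserve the rank of the sublattice of $\Num(S)$ generated by fiber components, since reflections act by isometries and the curves stay effective — the trivial lattice $\Triv(f)=U\oplus(\text{fiber root lattice})$ has rank~$10$. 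By Shioda--Tate on the Jacobian this leaves Mordell--Weil rank~$0$, i.e.\ $\Jac(f)$ is extremal.

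The remaining work, and the main obstacle, is the case analysis: one must verify for \emph{all} $20$ (resp.\ $37$) root types, not just $A_5+2A_2$, that such an $E$ exists. This splits into a lattice-theoretic search — for each $R$, run through the embeddings $R\hookrightarrow U+E_8$, compute $R'$ and $R^\perp$, and locate a dual vector $w$ for which $E=w+H/m$ is primitive, isotropic, and has $E^\perp\cap R$ of rank~$8$ — and a geometric check that the reflections eliminating the base locus of $|2E|$ do not destroy the rank~$8$ fiber configuration. In practice the lattice search is routine but voluminous; the genuinely delicate point is the one flagged in Section~\ref{s:elliptic}, namely that on Enriques surfaces $(-2)$-classes need not be effective, so one has to ensure that the reflections used are indeed reflections in classes of \emph{smooth rational curves present in $R$} (or obtainable from them), which is where the careful control over the rank~$9$ configuration from \cite{S-Q-hom}, \cite{S-Q_l} is needed. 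I expect the proof to handle this by noting that $E$ can always be chosen so that the base locus reduction only involves curves already in $R$, keeping the rank~$8$ sublattice of fiber components intact, and then invoking \cite[\S4]{RuS} to dispose of the quasi-elliptic possibility.
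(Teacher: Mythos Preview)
Your proposal is correct and follows essentially the same approach as the paper: for each rank~$9$ root type one exhibits an isotropic vector $E$ with $E^\perp\cap R$ a rank~$8$ root lattice $R_0$, then argues that after the reflections eliminating the base locus $R_0$ is still supported on $(-2)$-curves in the fibers, so the Jacobian is extremal (the quasi-elliptic case being automatic). The only slip is that the case analysis must cover all $38$ rank~$9$ root types embedding in $U+E_8$, not just the $20$ (resp.~$37$) eventually realized on singular (resp.~classical) Enriques surfaces---the paper tabulates the required isotropic vectors for all $38$ in Table~\ref{T1}.
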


\begin{proof}
The proof is the same as in \cite{S-Q_l}.
In short, there are 38 root types $R$ of rank $9$ embedding into $U+E_8$.
For each of them, we exhibit a suitable isotropic vector $E$ as in Example \ref{ex1}
such that $E^\perp\cap R$ is a root lattice $R_0$ of rank $8$ (see Table \ref{T1}).
$R_0$ is not affected by a potential sign change in $E$,
but it may be moved around by the reflections needed to eliminate the base locus of $|2E|$.
In the end, however, $R_0$ will still be supported on $(-2)$-curves, and at the same time on the fibers of $|2E|$
which suffices to conclude the proposition.
\end{proof}

\begin{table}
$$
\begin{array}{|ccccc|}
\hline
R & R' & H^2 & E & R_0\\
\hline
\hline
A_9 & A_9 & 10 & a_2^\vee + 2h/5 & A_7+A_1\\
A_8+A_1 & A_8+A_1 & 18 & (a_3^\vee,0)+h/3 & A_5+A_2+A_1\\
& E_8+A_1 & 2 & (a_3^\vee,0) + h & A_5+A_2+A_1\\
A_7 + A_2 & E_7 + A_2 & 6 & (a_2^\vee,0) + h/2 & A_5+A_2+A_1\\
A_7 + 2A_1 & E_8 + A_1 & 2 & (a_4^\vee,0,0) + h & 2A_3+2A_1\\
A_6+A_2+A_1 & A_6+A_2+A_1 & 42 & (a_1^\vee,0,0)+h/7 & A_5+A_2+A_1\\
A_5 + A_4 & A_5+A_4 & 30 & (0,a_2^\vee)+h/5 & A_5 + A_2 + A_1\\
A_5+A_3+A_1 & E_6+A_3 & 24 & (a_2^\vee,0,0)+h/3 & 2A_3+2A_1\\
A_5+2A_2 & E_7+A_2 &
6 &
(0,0,a_1^\vee)+ h/3 & A_5+A_2+A_1\\
2A_4+A_1 & E_8 + A_1 &
2 &  (0,0,a_1^\vee) + h/2 & 2A_4\\
3A_3 & D_9 &
4 & (a_2^\vee,0,0) + h/2 & 2A_3+2A_1\\

A_3 + 3A_2 & A_3 + E_6 &
 12 & (a_1^\vee,0,0,0) + h/4 & 4A_2 \\
D_9 & D_9 &
4 & d_9^\vee + 3h/4 & A_8\\
D_8+A_1 & E_8+A_1 &
2 & d_8^\vee + h & A_7+A_1\\

D_6+A_3 & D_9 &
4 & (0,a_2^\vee)+h/2 & D_6+2A_1\\

D_5+A_4 & D_5+A_4 &
20 & (d_5^\vee,0) + h/4 & 2A_4\\

E_8+A_1 & E_8+A_1 &
2 & (e_8^\vee,0) + h & E_7+A_1\\
E_7+A_2 & E_7+A_2 &
6 & (e_3^\vee,0) + h & A_5+A_2+A_1\\
E_6+A_3 & E_6+A_3 &
12 & (e_6^\vee,0)+h/3 & D_5+A_3\\
E_6+A_2+A_1 & E_8+A_1 &
2 & (e_1^\vee,0,0)+h & A_5+A_2+A_1\\
\hline
\hline
E_7 + 2A_1 & E_8 + A_1 &
2 & (e_2^\vee,0,0) + h & D_6 + 2A_1 \\

D_7+2A_1 & D_9 &
4 & (d_1^\vee,0,0)+h/2 & D_6+2A_1\\
D_6+A_2+A_1 & E_7 + A_2 &
6 & (0,a_1^\vee,0) + h/3 & D_6+2A_1\\
D_6+3A_1 & E_8+A_1 &
2 & (d_2^\vee,0,0,0) + h & D_4+4A_1\\
D_5+A_3+A_1 & E_8+A_1 &
2 & (d_2^\vee,0,0) + h & 2A_3+2A_1\\
D_5+D_4 & D_9 &
4 & (d_1^\vee,0)+h/2 & 2D_4 \\
D_5+4A_1 & D_9 & 4 & d_1^\vee + h/2 & D_4+4A_1\\
D_4+A_3+2A_1 & D_9 &
4 & (d_1^\vee,0,0,0)+h/2 & 2A_3+2A_1\\
2D_4+A_1 & E_8+A_1 &
2 & (0,0,a_1^\vee)+h/2 & 2D_4\\

D_4+5A_1 & E_8+A_1 & 2 & a_1^\vee + h/2 & D_4+4A_1\\
D_4+A_2+3A_1 & E_7+A_2 & 6 & a_2^\vee+h/3 & D_4+4A_1\\
A_5+A_2+2A_1 & E_8+A_1 &
2 & (0,0,0,a_1^\vee)+h/2 & A_5+A_2+A_1\\
A_4+A_3+2A_1 & A_4+D_5 &
20 & (a_1^\vee,0,0,0)+h/5 & 2A_3+2A_1\\
2A_3+A_2+A_1 & E_7 + A_2 &
6 & (0,0,a_1^\vee,0) + h/3 & 2A_3+2A_1\\
2A_3+3A_1 & E_8+A_1 &
2 & (0,0,0,0,a_1^\vee)+h/2 & 2A_3+2A_1\\
A_3+6A_1 & D_9 & 4 & a_2^\vee + h/2 & 8A_1\\
4A_2+A_1 & E_8+A_1 &
2 & (0,0,0,0,a_1^\vee)+h/2 & 4A_2\\
A_2+7A_1 & A_2+E_7 &
6 & a_2^\vee+h/3 & 8A_1\\
9A_1 & E_8+A_1 & 2 & a_1^\vee + h/2 & 8A_1\\
\hline
\end{array}
$$
\caption{Isotropic vectors and root lattices (Proposition \ref{prop:isotropic})}
\label{T1}
\end{table}

In the next section, we will rule out  half of the above 38 root types on singular Enriques surfaces
(the lower 19 root types from Table \ref{T1}
such that the 19 root types from Theorem \ref{thm} remain).
For this purpose, it will be extremely useful to have the classification of extremal rational elliptic surfaces
in characteristic two handy (due to W. Lang in \cite{Lang1}, \cite{Lang2}).
Before going into the details, we note the following important observation:

\begin{Lemma}
\label{lem:3no}
There are no extremal rational elliptic surfaces
such that the fiber components off the zero section generate the root lattice
\[
2A_3 + 2A_1, \;\;\; D_6+2A_1 \;\;\; \text{ or } \;\;\; 2D_4.
\]
\end{Lemma}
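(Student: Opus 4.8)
The plan is to invoke the classification of extremal rational elliptic surfaces in characteristic two due to W.~Lang, and simply check that none of the three root lattices $2A_3+2A_1$, $D_6+2A_1$, $2D_4$ appears in the list of fiber configurations. Concretely, an extremal rational elliptic surface has, by the Shioda--Tate formula and the extremality hypothesis, fiber components off the zero section generating a root lattice of rank $8$, and the possible types are tabulated in \cite{Lang1}, \cite{Lang2}. So the proof reduces to a finite inspection: run through Lang's tables and confirm the absence of the three types above.

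The substance of the argument, however, is not the table lookup but an explanation of \emph{why} these three configurations fail, since over $\C$ or in odd characteristic some of them do occur (e.g.\ the configuration with two $I_4$ fibers and two $I_2$ fibers, i.e.\ $2A_3+2A_1$, is realized by an extremal rational elliptic surface in characteristic zero with Mordell--Weil group $\Z/4\Z \times \Z/2\Z$ or $(\Z/2\Z)^2$ depending on the torsion structure). The point is that in characteristic two the additive fibers degenerate: a fiber of Kodaira type $I_n^*$ contributes $D_{n+4}$, but the wild ramification in characteristic two forces the Euler number to jump, so the naive count $\sum e(F_v)=12$ that governs the possibilities in other characteristics is altered. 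I would therefore recall from Lang's classification precisely which configurations of additive fibers survive in characteristic two, and note that $D_6+2A_1$ would require an $I_2^*$ fiber together with two $I_2$ fibers (total tame Euler number $8+2+2=12$), which in characteristic two is obstructed because the $I_2^*$ fiber acquires extra (wild) ramification and no room is left; and similarly $2D_4$ would need two $I_0^*$ fibers, again impossible once wild ramification is accounted for. For $2A_3+2A_1$ the obstruction is of a different, purely arithmetic flavor: the torsion subgroup forced by such a multiplicative configuration cannot be realized as a finite Mordell--Weil group over a field of characteristic two, by the restrictions on torsion in Lang's list (essentially, $\Z/4\Z\times\Z/2\Z$ torsion does not occur).

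The main obstacle is thus not logical but expository: one must correctly extract from \cite{Lang1}, \cite{Lang2} the exact mechanism ruling out each case, distinguishing the additive cases ($D_6+2A_1$ and $2D_4$), where the failure is caused by wild ramification of additive fibers in characteristic two consuming Euler characteristic and leaving no valid complementary fibers, from the multiplicative case ($2A_3+2A_1$), where the failure is the non-existence of the required torsion group. A clean write-up would state the classification result, reproduce (or cite precisely) the relevant portion of Lang's tables, and then for each of the three lattices point to the row that would have to exist and explain its absence. I expect the whole proof to be short once the classification is quoted; the care needed is only in attributing each nonexistence to the right phenomenon and in making sure the Euler-number bookkeeping with wild ramification is stated correctly.
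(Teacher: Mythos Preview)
Your citation of Lang's classification is valid --- the paper itself acknowledges that the lemma ``of course follows from Lang's classification'' --- so the proof by table lookup is correct. However, the paper then gives a short self-contained argument that bypasses the classification entirely, and this argument is both simpler and more uniform than the case distinction you sketch.

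The paper's argument is a single stroke: for an extremal rational elliptic surface $X$ with fiber root lattice $R$, the theory of Mordell--Weil lattices gives $\MW(X)=E_8/R$. In each of the three cases $R=2A_3+2A_1$, $D_6+2A_1$, $2D_4$, one computes that $(\Z/2\Z)^2\subset E_8/R$, and hence $(\Z/2\Z)^2$ would embed into the Mordell--Weil group. But full $2$-torsion on an elliptic curve is impossible in characteristic two, contradiction. So all three configurations are excluded for the \emph{same} arithmetic reason.

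Your heuristic explanation correctly identifies the torsion obstruction for $2A_3+2A_1$, but attributes the failure of $D_6+2A_1$ and $2D_4$ to wild ramification and Euler-number bookkeeping. This is not wrong as a mechanism inside Lang's analysis, but it obscures the point: the same $(\Z/2\Z)^2$ torsion obstruction applies uniformly to all three, and no fiber-by-fiber Euler characteristic accounting is needed. The paper's approach buys you a proof that is independent of the classification and makes transparent why exactly these three lattices (and not others of rank $8$) are singled out.
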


\begin{proof}
Of course, this follows from Lang's classification,
but for the sake of completeness (and since it fits nicely into the scheme of our arguments),
we give a quick proof.
Let $X$ be a rational elliptic surface and denote by $R$ the root lattice generated by the fiber components
perpendicular to the zero section.
The theory of Mordell--Weil lattices \cite{ShMW} provides an equality
\[
\MW(X) = E_8/R.
\]
In the above  cases, this would imply
\[
(\Z/2\Z)^2 \subset \MW(X)
\]
which is clearly impossible in characteristic two.
\end{proof}

The following table lists all the extremal rational elliptic surfaces
in characteristic two  relevant to our issues.
We follow the  notation from \cite{MP},
even though the fiber types often degenerate compared to   $\C$.

%

 \begin{table}[ht!]
 $$
 \begin{array}{|cccc|}
 \hline
 \text{notation} & \text{Weierstrass eqn.} & \text{sing. fibers} & \MW\\
 \hline
 \hline
 X_{9111} & y^2 + xy + t^3y = x^3
 & I_9/0, I_1/t^3+1
 & \Z/3\Z\\
 X_{8211} & y^2 + xy + t^2y = x^3 + t^2x^2
 & I_8/0, III/\infty  & \Z/4\Z\\
 X_{6321} &
 y^2 + xy + t^2y = x^3 &
 I_6/0, IV/\infty, I_2/1 &
 \Z/6\Z\\
  X_{5511} & y^2 + (t+1)xy + t^2 y = x^3+tx^2 & I_5/0,\infty, I_1/t^2+t+1 &  \Z/5\Z\\
 X_{3333} &
 y^2 +xy+(t^3+1)y = x^3
 & I_3/0, t^3+1  & (\Z/3\Z)^2\\
 X_{431} & y^2 + txy + t^2y = x^3  & IV^*/0, I_3/\infty, I_1/1&  \Z/3\Z\\
  X_{321} &
  y^2 + xy = x^3 + tx
  & III^*/\infty, I_2/0 & \Z/2\Z\\
 X_{141} & y^2 + xy = x^3+t^2x
 & I_4/0, I_1^*/\infty  & \Z/4\Z\\
  \hline
 \end{array}
 $$
 \caption{Extremal rational elliptic surfaces in characteristic two}
 \label{T0}
 \end{table}

\begin{Remark}
\label{rem:other}
There are a few further extremal rational elliptic surfaces in characteristic two,
but they will not be relevant to our issues.
\end{Remark}

\section{Non-existence of certain root types on singular Enriques surfaces}
\label{s:latt}

Until Section \ref{s:classical},
we shall now restrict to \textbf{singular Enriques surfaces}.
Then  \eqref{eq:to} always defines an elliptic fibration in the sense
that the general fiber is a smooth curve of genus one (as opposed to quasi-elliptic fibrations),
and the Jacobian $\Jac(f)$ is a rational elliptic surface.

In order to rule out the lower 19 root types from Table \ref{T1},
we have to draw a few more consequences from
the other given data.
Recall that the primitive isotropic vector $E$ is effective, possibly after changing sign
while subtracting the base locus  amounts to a succession $\sigma_0$ of reflections in smooth rational curves;
here $|2\sigma_0(E)|$ induces the elliptic fibration \eqref{eq:to}.
In each case, there is a single $(-2)$-curve $C$ in $R$
such that $E$ is built by adding the dual vector $C^\vee$ and some fraction of $H$.
Hence $\sigma_0(C).\sigma_0(E)=1$, and it follows that $\sigma_0(C)$ comprises
a smooth rational bisection $B$ of the fibration $|2\sigma_0(E)|$
plus possibly some smooth rational curves $C_i$
contained in the fibers of \eqref{eq:to};
again these can be eliminated by reflections,
and since naturally $C_i.\sigma_0(E)=0$, these reflections do not affect $\sigma_0(E)$.

\begin{Summary}
\label{sum1}
There is a composition of reflections $\sigma$ such that $\sigma(E)$ is a half-pencil
and $B=\sigma(C)$ is a smooth rational bisection
while $\sigma(R_0)$ is supported on the fibers and meets $B$ in a prescribed way.
\end{Summary}

We shall now take a closer look at $\sigma(R_0)$.
This root lattice consists of $(-2)$-divisors, each effective or anti-effective
and supported on $(-2)$-curves (the fiber components).
Note that we cannot claim in general that $\sigma(R_0)$ consists of $(-2)$-curves,
and even if that were to hold true, it would not imply that the orthogonal summands $R_v$ of $R_0$
determine the singular fibers of $|2\sigma(E)|$ as extended Dynkin diagrams $\tilde R_v$.
Yet we can utilize the bisection $B$ very much to our advantage.

\begin{Lemma}
\label{lem:<2}
 Assume that $C$ meets less than  two  of the curves forming $R_0=\oplus_v R_v$
 (where all orthogonal summands $R_v$ are irreducible root lattices).
 Then the only possible fiber configuration for $|2\sigma(E)|$ comprises $\tilde R_v$ fibers.
\end{Lemma}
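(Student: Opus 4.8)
The plan is to combine the rank bookkeeping behind Proposition~\ref{prop:isotropic} with a short analysis of how the bisection $B$ can lie over a $\td$- or $\te$-type fibre, arguing by contradiction.

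First I would set up the lattice skeleton. By Proposition~\ref{prop:isotropic} the Jacobian of $|2\sigma(E)|$ is extremal, so the reducible fibres $F_1,\dots,F_k$ together support a root lattice $\bigoplus_jM_j$ of rank exactly $8$, where $M_j$ is the (irreducible) root lattice spanned by the non-identity components of $F_j$. Since $\sigma(R_0)$ is negative-definite of rank $8$, orthogonal to $\sigma(E)$ and supported on fibre components, it embeds into $\bigoplus_jM_j$ with finite index. As each $M_j$ is connected and every $(-2)$-vector of an orthogonal direct sum of (even, negative-definite) root lattices lies in a single summand, each irreducible summand $\sigma(R_v)$ is contained in one $M_{j(v)}$; writing $N_j:=\bigoplus_{j(v)=j}\sigma(R_v)$ one gets $N_j\subseteq M_j$ of finite index with $\bigoplus_jN_j=\sigma(R_0)$. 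Since the root lattice of a single fibre is always connected, the assertion of the Lemma is exactly that $N_j=M_j$ for every $j$.

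So suppose $N_{j_0}\subsetneq M_{j_0}$ for some $j_0$. A root lattice of type $A_n$ has no proper finite-index root sublattice (a connected graph on $m$ vertices spans $A_{m-1}$, so any full-rank root sublattice of $A_n$ is $A_n$), hence $M_{j_0}$ is of type $D_{\ge4}$ or $E_6,E_7,E_8$ and $F_{j_0}$ is additive; since $S$ is singular, $F_{j_0}$ is then not multiple, so its identity component has multiplicity one. Now the bisection enters: $\sigma$ is an isometry, so $B\cdot\sigma(\Gamma_i)=C\cdot\Gamma_i$ for the curves $\Gamma_i$ generating $R_0$, which is nonzero for at most one $i$ by hypothesis, and an anti-effective $\sigma(\Gamma_i)$ automatically pairs to $0$ with $B$ (for then $-\sigma(\Gamma_i)$ is an effective sum of fibre components, none of them $B$, while $C\cdot\Gamma_i\ge0$ as $C\ne\Gamma_i$). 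Thus at least $r-1$ of the $r:=\rk M_{j_0}$ generators of $N_{j_0}$ may be taken effective and orthogonal to $B$; being effective $(-2)$-divisors on $F_{j_0}$ orthogonal to $B$, they are supported only on the components of $F_{j_0}$ that $B$ misses. Finally $B\cdot F_{j_0}=2$ with positive fibre multiplicities forces $B$ to meet at most two components of $F_{j_0}$, and only one if it meets a component of multiplicity $\ge2$ — so these $\ge r-1$ generators all lie in the root lattice spanned by a short, explicit list of vertices of $\td_r$, $\te_6$, $\te_7$ or $\te_8$.

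It then remains to run through the extended Dynkin diagrams $\td_r$, $\te_6$, $\te_7$, $\te_8$ and the few possibilities for the component(s) of $F_{j_0}$ met by $B$ (remembering that the identity component has multiplicity one), and to check in each case that the components avoided by $B$ cannot span a root lattice hosting the $\ge r-1$ effective generators together with the remaining generator of $N_{j_0}$ at index $>1$; in the residual borderline configurations one invokes Lemma~\ref{lem:3no} to eliminate the offending extremal fibre lattice. Each case produces a contradiction, proving $N_j=M_j$ for all $j$, i.e.\ the reducible fibres of $|2\sigma(E)|$ are precisely the $\tilde R_v$. I expect this final case analysis to be the only delicate step — pinning down, for each $\td$- and $\te$-fibre, which component $B$ can actually pass through and checking that the complementary components admit no proper finite-index copy of $M_{j_0}$; everything before it is essentially bookkeeping.
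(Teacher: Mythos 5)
The paper itself offers no combinatorial argument at this point: its proof is the single remark that this is Criterion~7.1 of \cite{S-Q_l}, whose proof carries over verbatim since it is characteristic-free. So your write-up has to supply the whole argument, and the part you defer --- the concluding case analysis --- is exactly where the content lies; with the tools you allow yourself it cannot be completed. Your constraints on a ``bad'' fibre $F_{j_0}$ (one with $N_{j_0}\subsetneq M_{j_0}$ of equal rank, hence $M_{j_0}$ of type $D$ or $E$ and $F_{j_0}$ additive, non-multiple) are: at least $r-1$ of the $r$ generators of $N_{j_0}$ are $\pm$effective, orthogonal to $B$, hence supported on the components of $F_{j_0}$ avoided by $B$; plus $B\cdot F_{j_0}=2$; plus Lemma~\ref{lem:3no}. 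But $B$ may well meet a \emph{single} component of $F_{j_0}$, either a simple component with intersection number $2$ or a multiplicity-two component once --- both are genuinely possible for a bisection (on the K3 cover the two sections into which $B$ splits can pass through the same simple component of the corresponding fibre(s)). In that situation the avoided components are all but one node of the extended Dynkin diagram and, by Borel--de Siebenthal, span a root sublattice of $M_{j_0}$ of \emph{full} rank $r$, every root of which is automatically $\pm$effective (the avoided components are a simple-root basis of their span). Your support argument then yields no contradiction whenever $N_{j_0}$ embeds into that sublattice, which it typically does.

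Concretely, take $M_{j_0}=E_8$, i.e.\ a fibre of type $II^*$, with $B$ meeting its unique simple component twice: the avoided components span a full copy of $E_8$, and nothing in your sketch excludes $N_{j_0}=A_8$, $D_8$, $A_7+A_1$ or $E_7+A_1$ sitting inside it --- precisely the lattices $R_0$ occurring in Table~\ref{T1} for $D_9$, $D_8+A_1$, $A_8+A_1$, $E_8+A_1$. Lemma~\ref{lem:3no} is no help here: it only excludes $2A_3+2A_1$, $D_6+2A_1$ and $2D_4$ as the \emph{full} fibral lattice, whereas extremal rational elliptic surfaces with a $II^*$ fibre do exist in characteristic two; the analogous problem arises for $\tilde E_7$, $\tilde E_6$ and $\tilde D_n$ fibres met by $B$ in one double component. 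So the claim that ``each case produces a contradiction'' is unsubstantiated, and closing it requires genuinely more input than support-plus-rank bookkeeping (for instance the prescribed mutual intersection numbers of the $\sigma(\Gamma_i)$ and of $B$ with the half-pencil, or discriminant/height considerations) --- which is what the cited Criterion~7.1 has to provide and what is missing from your proposal. A minor further point: your parenthetical justification that $A_n$ has no proper full-rank root sublattice (``a connected graph on $m$ vertices spans $A_{m-1}$'') is not an argument; the correct reason is Borel--de Siebenthal again, the extended diagram of $A_n$ being a cycle, so deleting two or more nodes drops the rank.
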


\begin{proof}
This is Criterion 7.1 from \cite{S-Q_l}. The proof carries over literally as it does not depend on the characteristic at all.
\end{proof}

\begin{Proposition}
\label{prop:18no}
The 19 lower root lattices in Table \ref{T1}
cannot be supported on singular Enriques surfaces in characteristic two.
\end{Proposition}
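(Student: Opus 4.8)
The plan is to combine Proposition~\ref{prop:isotropic} with the bisection set-up of Summary~\ref{sum1}: for each of the $18$ root types in question the surface $S$ carries an elliptic fibration $|2\sigma(E)|$ whose Jacobian is an \emph{extremal} rational elliptic surface, whose reducible fibres carry the rank~$8$ root lattice $\sigma(R_0)$ as a configuration of genuine $(-2)$-curves, and which admits the smooth rational bisection $B=\sigma(C)$. Reading off the last column of Table~\ref{T1}, the lattice $R_0$ is in every one of these cases one of
\[
2A_3+2A_1,\qquad D_6+2A_1,\qquad 2D_4,\qquad D_4+4A_1,\qquad 8A_1,\qquad 4A_2 ,
\]
so it suffices to rule out each of these as a configuration of $(-2)$-curves in the fibres of an extremal rational elliptic surface in characteristic two (compatibly with a bisection, and with $S$ being singular).

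In most cases the distinguished curve $C$ meets at most one irreducible summand of $R_0$ (for instance for $E_7+2A_1$, $D_6+A_2+A_1$ or $9A_1$). Then Lemma~\ref{lem:<2} forces the fibre configuration of $|2\sigma(E)|$ to consist precisely of the fibres $\tilde R_v$, so the extremal Jacobian has singular fibres $2I_4+2I_2$, $I_2^*+2I_2$, $2I_0^*$, $I_0^*+4I_2$, $8I_2$ or $4I_3$ respectively. The first three are excluded directly by Lemma~\ref{lem:3no}; the next two are impossible because their Euler numbers $14$ and $16$ exceed $12$; only $4I_3$ survives, realized by $X_{3333}$ of Table~\ref{T0}, and this case is postponed.

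For the remaining entries $C$ meets two summands of $R_0$ (for instance $D_5+A_3+A_1$ via $d_2^\vee$, $A_3+6A_1$ via $a_2^\vee$, or $D_6+3A_1$ via $d_2^\vee$), and Lemma~\ref{lem:<2} no longer applies; here I would argue combinatorially, using that the fibre components of a rational elliptic surface span a lattice of rank at most $8$, that the Euler numbers of the fibres add up to $12$, and that any singular fibre $T$ contains at most $\tfrac23\,e(T)$ pairwise disjoint $(-2)$-curves, with equality only when $T$ is of type $I_0^*$. Since $\sigma(R_0)$ already has rank $8$, these three bounds leave, for each of $R_0=8A_1$, $D_4+4A_1$, $2A_3+2A_1$, $D_6+2A_1$, only a short list of candidate fibre configurations whose reducible-fibre component lattices are all among $2A_3+2A_1$, $D_6+2A_1$, $2D_4$ — and hence are excluded by Lemma~\ref{lem:3no}. (One checks in passing that the further extremal rational elliptic surfaces of Remark~\ref{rem:other} do not interfere.)

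The genuinely delicate case is $R=4A_2+A_1$, where $R_0=4A_2$: Lemma~\ref{lem:<2} only gives fibres $4I_3$, and in contrast to all other cases the extremal rational elliptic surface $X_{3333}$ does occur with exactly this configuration, so the Jacobian alone yields no contradiction. I expect this to be the main obstacle. A purely lattice-theoretic argument does not suffice either: writing $F$ for the fibre class and $\Theta_0^{(i)}$ for the component of the $i$-th $I_3$ not belonging to $R_0$, the condition $\sigma(R_0)\perp B$ forces $B$ to meet each $I_3$ only along $\Theta_0^{(i)}$ (with intersection multiplicity two), whereupon the rank~$10$ lattice generated by $F$, $B$ and the eight curves of $\sigma(R_0)$ has Gram determinant $-324=-18^2$ and therefore embeds into $\Num(S)=U+E_8$ as a sublattice of index $18$. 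The plan is thus to pass to the base-change and twisting construction of Sections~\ref{s:bc} and~\ref{s:twist}: one shows either that any genus one fibration with $4I_3$ fibres on a \emph{singular} Enriques surface would have to come from $X_{3333}$ in a way that produces a classical (or supersingular) rather than a singular Enriques surface, or one rules out the required bisection $B$ directly on $X_{3333}$ using that $\MW(X_{3333})=(\Z/3\Z)^2$ contains no $2$-torsion. Carrying this out is the crux.
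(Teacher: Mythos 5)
Your first step coincides with the paper's: apply Lemma~\ref{lem:<2} to the data of Table~\ref{T1} and compare with Lang's classification (via Lemma~\ref{lem:3no} and Euler number counts such as $e\geq 14$ for $\tilde D_4+4\tilde A_1$ and $e\geq 16$ for $8\tilde A_1$), which indeed eliminates everything except $4A_2+A_1$ and the cases where the bisection meets two curves of $R_0$. But the two places where real work remains are exactly where your proposal falls short. First, $R=4A_2+A_1$ is not proved: your remark that $\MW(X_{3333})=(\Z/3\Z)^2$ has no $2$-torsion only disposes of the two-torsion case of \ref{ss:2t}; the substance is the non-torsion case, and you explicitly leave it as ``the crux.'' The paper settles it in \ref{ss:4A2+A1} by an explicit computation in the quadratic-twist framework: writing the putative section $P$ on the K3 cover (a quadratic base change of $X_{3333}$) in the normal form of Section~\ref{s:ex} and solving \eqref{eq:gh}, one finds either degenerate parameters, or that $g$ is invariant under the deck transformation \eqref{eq:deck}, so $P$ is induced from $X_{3333}$ rather than from its twist --- a contradiction. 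Without some such argument the proposition is simply not established.

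Second, your combinatorial treatment of the residual cases rests on the assumption, stated in your opening paragraph, that $\sigma(R_0)$ sits in the fibres as a configuration of genuine (pairwise disjoint, where relevant) $(-2)$-curves. The paper explicitly warns that this cannot be assumed: $\sigma(R_0)$ is a priori only a collection of orthogonal $(-2)$-divisors supported on fibre components. This matters concretely for $R=A_3+6A_1$: since $E_7\supset 7A_1$, the configuration $\tilde E_7+\tilde A_1$ (realized in characteristic two by $X_{321}$, hence untouched by Lemma~\ref{lem:3no}) accommodates $8A_1$ at the level of divisor classes, so neither your $\tfrac23 e(T)$ disjoint-curve bound nor a lattice-embedding bound eliminates it, and your claim that the surviving component lattices are all among $2A_3+2A_1$, $D_6+2A_1$, $2D_4$ is false for this case. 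The paper therefore adds a dedicated argument: reflections in fibre components put $a_1,a_3$ on the two simple components of the $\tilde E_7$ fibre met by $B$, and orthogonality then forces the remaining five copies of $A_1$ into $D_4$, impossible for rank reasons. You would need either to justify that the relevant $(-2)$-divisors are irreducible (which is not true in general) or to supply an argument of this finer type; as written, the step ``hence excluded by Lemma~\ref{lem:3no}'' has a hole exactly at the configuration that an extremal rational elliptic surface in characteristic two does realize.
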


\begin{proof}
We apply Lemma \ref{lem:<2} to the data from Table \ref{T1}.
Except for $R=A_3+6A_1$, it follows that $R_0$ determines the singular fibers of $|2\sigma(E)|$.
But then, except for $R=A_5+A_2+2A_1$ and $4A_2+A_1$,
neither case is compatible with Lang's classification
(see especially Lemma \ref{lem:3no}).
Both exceptional root types will be ruled out later:
$R=A_5+A_2+2A_1$ using the base change construction in \ref{ss:no},
$R=4A_2+A_1$
based on a unified geometric argument in Section \ref{s:triple}.

It remains to exclude the root type $R=A_3+6A_1$
(which over $\C$ is not compatible with the orbifold Bogomolov--Miyaoka--Yau inequality).
The two $(-2)$-curves  $a_1, a_3\subset A_3$
which meet $C=a_2$ are taken to $(-2)$-divisors $\sigma(a_1), \sigma(a_3)$
whose support could potentially involve two different simple  components of a single fiber $F_0$
both of which are met by the bisection $B=\sigma(a_2)$.
All other summands $A_1$'s from $R$ are either mapped into $F_0$,
or they give rise to fibers of type $\tilde A_1$
(which follows as in the proof of Lemma \ref{lem:<2}).
A priori, this leads to the configurations
\[
\tilde D_4 + 4\tilde A_1, \;\; 2\tilde D_4, \;\; \tilde D_6 + 2\tilde A_1, \;\; \tilde E_7+\tilde A_1
\]
(no $\tilde E_8$ since it only has one simple fiber component),
but by inspection of Lang's classification (and Lemma \ref{lem:3no}),
this only leaves $\tilde E_7+\tilde A_1$.
By some reflections in fiber components, we can arrange for $a_1, a_3$ to map to
the two simple fiber components met by $B$.
But then the 5 orthogonal copies of $A_1$ embed into the fiber minus these two components,
and by orthogonality, minus the adjacent components, i.e.~into $D_4$
which is  impossible for rank reasons.
%
%
\end{proof}

We illustrate these ideas by continuing to investigate the root typ $R=A_5+2A_2$
as 
in Example \ref{ex1}.

\begin{Example}[$R=A_5+2A_2$ cont'd] \rm
\label{ex2}
Consider the root type $R=A_5+2A_2$ with the second isotropic vector from Example \ref{ex1}.
The fiber configurations  accommodating $R_0=4A_2$ are
\begin{eqnarray}
\label{eq:4A2}
4\tilde A_2, \;\;\; \tilde E_6+\tilde A_2, \;\;\; \tilde E_8.
\end{eqnarray}
However, the last configuration is not compatible with the bisection meeting some fiber component with multiplicity one,
but the other two work perfectly fine.
\end{Example}

For completeness, we record the following nice consequence of Proposition \ref{prop:18no}:

\begin{Corollary}
The orbifold Bogomolov--Miyaoka--Yau inequality holds for any $\Q_\ell$-cohomology projective plane
obtained from a singular Enriques surface in characteristic two.
\end{Corollary}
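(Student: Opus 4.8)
The plan is to deduce the orbifold Bogomolov--Miyaoka--Yau (BMY) inequality for the relevant $\Q_\ell$-cohomology projective planes directly from the classification already obtained in Proposition \ref{prop:18no}, rather than proving it intrinsically. Recall that the $\Q_\ell$-cohomology projective planes in question arise by contracting a rank $9$ configuration $R$ of smooth rational curves on a singular Enriques surface $S$ in characteristic two; the singularities of the resulting surface are exactly the ADE-singularities corresponding to the orthogonal summands of $R$. The orbifold BMY inequality is a statement purely about the combinatorial data of an ADE-configuration $R$ of rank $9$ sitting on a surface with the numerical invariants of an Enriques surface (equivalently, on a $\Q_\ell$-cohomology projective plane): it asserts that a certain weighted sum of local orbifold Euler-number defects attached to the summands $R_v$ does not exceed the global orbifold Euler number. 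Concretely, for each root type $R = \bigoplus_v R_v$ one checks the inequality $\sum_v e_{\mathrm{orb}}(R_v) \le 3$ (with the usual normalization in which $e_{\mathrm{orb}}$ of an $A_n$, $D_k$, $E_l$ singularity is the Dedekind-type quantity $n/(n+1)$, $1 - 1/(4(k-2))$, etc.).

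The key steps, in order, are as follows. First, I would recall the precise form of the orbifold BMY inequality for normal surfaces with quotient (here ADE) singularities and trivial-type numerical invariants, referring to the orbifold-Chern-number formalism used over $\C$ (and observe that the statement is purely arithmetic-combinatorial once the local contributions are fixed). Second, I would note that by Proposition \ref{prop:18no} the only root types $R$ of rank $9$ that actually occur on singular Enriques surfaces in characteristic two are the $20$ listed in Theorem \ref{thm} — in particular the $18$ ``lower'' root types of Table \ref{T1}, which include precisely those violating (or lying on the boundary of) the orbifold BMY inequality, are excluded. Third, I would verify the inequality for each of the $20$ surviving root types; this is a short finite check, and in fact each of these $20$ root types already appears on a complex Enriques surface carrying the same configuration (cf.\ \cite{HKO}, \cite{S-Q-hom}), where the orbifold BMY inequality is known to hold, so the check can be imported wholesale rather than recomputed.

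The main obstacle — and it is a mild one — is purely bookkeeping: one must make sure that the local orbifold contributions one attaches to $A_n$, $D_k$, $E_l$ points in characteristic two agree with those used in the characteristic-zero orbifold BMY statement. Since the singularities are ADE and hence (tame) quotient singularities even in characteristic two, their local orbifold Euler numbers depend only on the dual graph, so there is no genuine characteristic-$p$ subtlety here; the only thing to be careful about is that the excluded root types in Proposition \ref{prop:18no} are exactly the ones one needs excluded, which is why the parenthetical remarks in that proof (``which over $\C$ is not compatible with the orbifold Bogomolov--Miyaoka--Yau inequality'') were recorded. Thus the proof reduces to the observation:

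\begin{proof}
Let $Y$ be a $\Q_\ell$-cohomology projective plane obtained by contracting a rank $9$ configuration $R$ of smooth rational curves on a singular Enriques surface $S$ in characteristic two. By Proposition \ref{prop:18no}, $R$ is one of the $20$ root types listed in Theorem \ref{thm}. Each of these root types also occurs on a complex Enriques surface carrying a configuration of smooth rational curves of the same type (see \cite{HKO}, \cite{S-Q-hom}), and there the associated $\Q_\ell$-cohomology (indeed, ordinary cohomology) projective plane is a complex normal surface with ADE-singularities and trivial canonical bundle, for which the orbifold Bogomolov--Miyaoka--Yau inequality is known to hold. Since the orbifold Euler-number contribution of an ADE-singularity depends only on its dual graph — the singularities being tame quotient singularities, also in characteristic two — the corresponding weighted inequality for $Y$ coincides termwise with the one verified over $\C$, and hence holds. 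Equivalently, the orbifold BMY inequality rules out precisely (a superset of) the lower $18$ root types of Table \ref{T1}, all of which are excluded by Proposition \ref{prop:18no}; the remaining $20$ satisfy it.
\end{proof}
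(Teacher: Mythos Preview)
Your approach is essentially the paper's: the corollary is recorded there without proof, as an immediate consequence of Proposition \ref{prop:18no}, and you correctly spell out the reason --- the $20$ surviving root types all satisfy the orbifold BMY inequality (as already verified over $\C$ in \cite{HKO}, \cite{S-Q-hom}), and the inequality is a purely combinatorial statement about the Dynkin type of $R$.

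One factual slip to fix: your claim that ADE singularities are ``tame quotient singularities, also in characteristic two'' is false. The local groups for $A_1$, $D_n$, $E_l$ (and $A_n$ with $n$ odd) all have even order, hence are wild in characteristic two; in fact several rational double points in characteristic two fail to be quotient singularities in any reasonable sense. This does not actually damage your argument --- you are treating orbifold BMY as a numerical inequality attached to the root type, which is the right viewpoint --- but the tameness justification should simply be dropped. Also, in your closing sentence the parenthetical ``(a superset of)'' has the inclusion reversed: the root types violating BMY form a \emph{subset} of the $18$ excluded types (indeed, over $\C$ only a handful of them, such as $A_3+6A_1$, are ruled out by BMY alone), not a superset.
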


\section{Base change construction}
\label{s:bc}

To translate the  data from Table \ref{T1} into geometric information,
and eventually into explicit equations,
we elaborate on a base change construction
which was originally due to Kond\=o in  \cite{Kondo-Enriques}
and later extended in  \cite{HS}.
Notably, a singular Enriques surface $S$ in characteristic two
still has a universal K3 cover
\[
X \to S
\]
which exhibits $S$ as a quotient of $X$ by a fixed point free involution $\tau$
(unlike for classical and supersingular Enriques surfaces
where $X$ may only be K3-like etc).
Note that $X$ inherits an elliptic fibration from $S$
(for instance by pulling back the half-pencil $\sigma(E)$),
\begin{eqnarray}
\label{eq:K3}
X \to \PP^1,
\end{eqnarray}
which presently is endowed with the two disjoint sections $O, P$
which the smooth rational bisection $B$ splits into.
The next result translates Kond\=o's work \cite{Kondo-Enriques} over $\C$ to characteristic two,
combining the Enriques involution $\tau$ with  translation by $P$, denoted by $t_P$.
It has been obtained independently by Martin in \cite{Martin}
in relation with Enriques surfaces with finite automorphism groups.

%
%
%

\begin{Proposition}
\label{prop:imath}
There is an involution $\imath$ on $X$
such that
\[
\tau = t_P \circ \imath.
\]
\end{Proposition}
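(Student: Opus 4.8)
The plan is to exploit the structure of the elliptic fibration \eqref{eq:K3} on the K3 cover $X$ together with the fact that the two sections $O, P$ are \emph{disjoint}. First I would pin down the group-theoretic skeleton: on a K3 elliptic surface with chosen zero section $O$, there is a well-known inversion involution $\iota_0$ (the hyperelliptic/fibrewise $[-1]$ map with respect to $O$), but that is not quite what we want here since, in characteristic two, $[-1]$ on an elliptic curve need not have the nice fixed-point behaviour it has away from $2$. So instead I would argue abstractly: the Enriques involution $\tau$ descends to an involution $\bar\tau$ on the base $\PP^1$ (it must, since $\tau$ preserves the fibration class, being pulled back from $S$), whereas $t_P$ acts trivially on the base; hence $\imath := t_P^{-1}\circ\tau = t_{-P}\circ\tau$ also induces $\bar\tau$ on $\PP^1$. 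The content of the Proposition is that $\imath$ is an \emph{involution}, i.e.\ $\imath^2 = \id$.

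The key computation is therefore $\imath^2 = t_{-P}\,\tau\,t_{-P}\,\tau$. Now $\tau\,t_{-P}\,\tau^{-1}$ is translation by $\tau(-P)$ \emph{in the elliptic surface structure on the fibre over $\bar\tau(\text{base point})$}; more precisely, $\tau$ conjugates the fibrewise group law over a fibre $X_s$ to the group law over $X_{\bar\tau(s)}$ but with zero section $\tau(O)$ rather than $O$. The crucial input is that $\tau$ permutes the set $\{O, P\}$ of sections: this is exactly the statement that the bisection $B$ on $S$ pulls back to $O\cup P$ and that $\tau$ is the deck involution of $X\to S$, so $\tau$ must swap the two components (it cannot fix either, as $\tau$ is fixed-point free while a section meets each fibre once). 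Thus $\tau(O) = P$ and $\tau(P) = O$. With this, $\tau\,t_{-P}\,\tau$ becomes translation-by-$(-P)$ measured against zero section $P$, which as a morphism of the abstract surface is just $t_{-(-P)} = t_P$ relative to the original zero section $O$ — one checks this because ``translate by $Q$ relative to zero section $P$'' equals ``$t_{Q-P}$ relative to $O$'' composed appropriately, and here $Q = \tau(-P)$ with $\tau(-P) = -\tau(P)+2\tau(O)$-type bookkeeping collapses since $\tau(P)=O$ is the new zero. Running this carefully gives $\imath^2 = \id$.

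The main obstacle, and the step I would spend the most care on, is making the conjugation identity $\tau\,t_{P}\,\tau^{-1} = t_{\tau(P)}$ precise \emph{in characteristic two}, where $\tau$ is not a fibrewise group automorphism in any naive sense and where one must be scrupulous about the change of zero section. I would handle this either (a) by working with the relative Picard sheaf $\underline{\Pic}^0_{X/\PP^1}$, on which $\tau$ acts compatibly and where translations become honest additions, reducing everything to an identity of sections of a group sheaf; or (b) following Kond\=o's original argument in \cite{Kondo-Enriques} and its extension in \cite{HS} verbatim, checking line by line that no step uses $p\neq 2$ — the fibration is genuinely elliptic here (the general fibre is smooth of genus one), which is what makes the translation $t_P$ well-defined, so the only real characteristic-two subtlety is notational. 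A secondary point to verify is that $\imath$ is a genuine automorphism of $X$ (a priori it is only a birational self-map if $t_P$ is defined via the group law on the generic fibre), but since $X$ is a smooth minimal surface of nonnegative Kodaira dimension, any birational self-map is biregular, so this is automatic. Finally, I would remark that $\imath$ necessarily \emph{has} fixed points (it is the analogue of the inversion involution), in contrast to $\tau$, which is the geometric payoff used in the subsequent base-change analysis.
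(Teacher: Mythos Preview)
Your approach is correct and takes a genuinely different route from the paper. You reduce $\imath^2=\id$ to the conjugation identity $\tau\,t_{-P}\,\tau = t_P$, which follows from the single crucial observation that $\tau$ swaps the sections $O$ and $P$ (forced, as you say, by fixed-point-freeness of $\tau$ together with the fact that every automorphism of $\PP^1$ has a fixed point). The cleanest way to package the computation is perhaps this: since $\imath=t_{-P}\circ\tau$ sends $O$ to $O$, its restriction to each smooth fibre is a group isomorphism over $\bar\tau$, whence $\imath\,t_P\,\imath^{-1}=t_{\imath(P)}=t_{-P}$; on the other hand, rewriting $\tau^2=\id$ as $(t_P\imath)^2=\id$ gives $\imath\,t_P\,\imath=t_{-P}$; comparing the two yields $\imath=\imath^{-1}$. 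Your change-of-zero-section bookkeeping reaches the same endpoint, though the phrase ``translation-by-$(-P)$ measured against zero section $P$'' should read ``translation by $\tau(-P)$ rel.\ zero $P$'' --- you effectively correct this two lines later when you set $Q=\tau(-P)=2\tau(O)-\tau(P)=2P$, so that $Q-P=P$ as required.

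The paper, by contrast, never uses that $\tau$ swaps $O$ and $P$. Instead it observes that $\imath^2$ fixes fibres and $O$ pointwise, hence acts on the generic fibre as a group automorphism of order in $\{1,2,3,4,6\}$, and then eliminates order $>1$ case by case: order divisible by $3$ is ruled out via the induced action on the regular $2$-form (which must be trivial since both $\tau^*$ and $t_P^*$ fix it), while orders $2$ and $4$ are excluded by a delicate analysis of the unique ramified fibre $F_\infty$, invoking genuinely characteristic-$2$ facts (the $2$-torsion structure of a supersingular curve when $F_\infty$ is smooth, and $\Aut(\mathbb G_m)\cong\Z/2\Z$ when $F_\infty$ is of type $I_{2n}$). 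Your argument is shorter, characteristic-free, and closer in spirit to Kond\=o's original; the paper's argument is heavier but yields as a byproduct the precise description of $\imath|_{F_\infty}$ that is immediately reused in proving the next corollary ($X/\imath=\Jac(f)$).
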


\begin{proof}
The proof follows the line of arguments in \cite{S-Q-hom}
although we have to make several modifications in order to account
for the special features in characteristic two.
Consider the automorphism
\[
\imath = t_P^{-1}\circ\tau \in \Aut(X).
\]
Then $\imath$ fixes $O$ as a set, with a single fixed point in the ramified fiber, say $F_\infty$.
We now turn to $\imath^2$ which the proposition states to be the identity.
Since $\imath^2$ fixes
fibers as sets and
$O$ pointwise,
it acts as an automorphism on the generic fiber $X_\eta$ of \eqref{eq:K3}.
In particular,
\[
\mbox{ord}(\imath^2)\in\{1,2,3,4,6\}.
\]
If $3\mid\mbox{ord}(\imath^2)$, then the induced action of $\imath^2$ on the  regular $1$-form of $X_\eta$
involves a primitive cube root of unity.
Naturally, this extends to the action on a regular $2$-form $\omega$ on $X$.
On the other hand, the involution $\tau^*$ leaves $\omega$ invariant, and so does $(t_P)^*$,
so we obtain a contradiction.

It remains to rule out the cases $\mbox{ord}(\imath^2)\in\{2,4\}$.
We switch to the ramified fiber $F_\infty$.
If $F_\infty$ is smooth, then $\imath\in\Aut(F_\infty)$
(since $\imath$ fixes $O|_{F_\infty}$),
and it follows right away that
$\mbox{ord}(\imath)=4, \;
\mbox{ord}(\imath^2)=2$
and
$F_\infty$ is supersingular,
contradicting the classification of ramified fibers reviewed in Section \ref{s:elliptic}.

If $F_\infty$ is singular, i.e.~of Kodaira type $I_{2n}$ for some $n\geq 1$,
then $\imath$ acts as an automorphism of the smooth locus $\Theta_0^\#$ of the identity component $\Theta_0$
of $F_\infty$. But this leads to the multiplicative group:
\begin{eqnarray}
\label{eq:0}
\Theta_0^\# \cong \mathbb G_m \;\; \Longrightarrow \;\;  \Aut(\Theta_0^\#) = \Z/2\Z \;\; \Longrightarrow \;\;
(\imath^2)|_{\Theta_0^\#}=\id.
\end{eqnarray}
However, with $\imath^2$ acting as inversion on the generic fiber,
it cannot act as identity on any component of a multiplicative fiber
(for instance, inversion interchanges the nodes where the fiber components intersect).
This contradiction completes the proof of Proposition \ref{prop:imath}.
\end{proof}

%

\begin{Corollary}
\label{cor:RES}
The involution $\imath$ leads us back to the rational elliptic surface $\Jac(f)$:
\[
X/\imath=\Jac(f).
\]
\end{Corollary}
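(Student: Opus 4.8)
The plan is to compute the quotient $X/\imath$ directly and identify it with the Jacobian $\Jac(f)$ of the genus one fibration $f\colon S\to\PP^1$. The key input is Proposition~\ref{prop:imath}: since $\tau = t_P\circ\imath$ with $\tau$ the fixed-point-free Enriques involution and $t_P$ translation by the section $P$ (which is automorphism-of-fibration over $\PP^1$), the involution $\imath$ preserves the elliptic fibration \eqref{eq:K3} on $X$, acts fiberwise, and fixes the zero section $O$ as a set. First I would observe that $\imath$ acts on the generic fiber $X_\eta$ as an automorphism fixing the origin $O|_\eta$; from the proof of Proposition~\ref{prop:imath} we already know $\imath(P)=-P$, and combined with the fact that $\imath^2=\id$ while $\imath\neq\id$ (as $\tau\neq t_P$), the only possibility for the action on $X_\eta$ is $\imath|_{X_\eta} = [-1]$, the inversion involution on the elliptic curve $X_\eta$ over the function field of $\PP^1$.

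Next I would pass to the quotient. The elliptic curve $X_\eta$ is its own Jacobian, and $X_\eta/[-1] \cong \PP^1$ over the function field; globally $X/\imath$ is a surface fibered over $\PP^1$ with rational generic fiber. But the point is not merely to compute the quotient abstractly: I would argue that $X/\imath$ is the \emph{relatively minimal} smooth model of this quotient, and that it carries a genus one fibration whose generic fiber recovers $X_\eta$ up to the standard quotient-by-inversion construction. More precisely, $\Jac(f)$ is characterized as the relatively minimal elliptic surface over $\PP^1$ whose generic fiber is the Jacobian of the generic fiber $S_\eta$ of $f$; since $X\to S$ is an étale double cover inducing \eqref{eq:K3}, the generic fiber $X_\eta$ is an unramified double cover of $S_\eta$, hence $X_\eta$ and $S_\eta$ have the same Jacobian (the Jacobian being insensitive to étale base change of the curve, or more directly: $X_\eta \to \PP^1_\eta$ realizing $X_\eta$ as a torsor whose associated Jacobian is $\mathrm{Pic}^0$). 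So $X_\eta$ \emph{is} its own Jacobian and equals $\Jac(f)_\eta$, and taking $\imath = [-1]$ and forming $X/\imath$ and resolving/minimalizing gives back the unique relatively minimal elliptic surface with that generic fiber, namely $\Jac(f)$.

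The main obstacle I expect is the careful bookkeeping at the singular and ramified fibers: one must check that the minimal resolution of $X/\imath$ does not introduce components that would make it fail to be relatively minimal or would change the fibration type, and that the construction genuinely yields $\Jac(f)$ rather than some blow-up of it. Concretely, $\imath = [-1]$ has fixed points (the $2$-torsion and the singular points of the fibers where inversion acts nontrivially), so $X/\imath$ has quotient singularities of type $A_1$; resolving these and then checking relative minimality fiber by fiber — using Kodaira/Tate theory in characteristic two and the list of fiber types, together with the fact already extracted in the proof of Proposition~\ref{prop:imath} that the ramified fiber $F_\infty$ is either smooth supersingular with $j=0$ or multiplicative of type $I_{2n}$ — is where the real work lies. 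One should note that since $\Jac(f)$ is rational (as recorded just before Proposition~\ref{prop:isotropic}) and $X$ is a K3 surface, the quotient $X/[-1]$ before resolution is already a (singular) rational surface, consistent with the identification; the Euler-number and Picard-number count $e(X) = 24$, $e(\Jac(f)) = 12$ gives a useful sanity check. Once the fiberwise analysis is in place, the identification $X/\imath = \Jac(f)$ follows from the uniqueness of the relatively minimal model with prescribed generic fiber.
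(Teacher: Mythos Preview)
Your proposal rests on a mistaken premise: you assume that $\imath$ acts trivially on the base $\PP^1$ and hence fiberwise as $[-1]$. This is false. The Enriques involution $\tau$ acts \emph{nontrivially} on the base of the fibration \eqref{eq:K3} (with a single fixed point at the ramified fiber $F_\infty$, as is consistent with there being exactly one multiple fiber on the singular Enriques surface $S$), and since $t_P$ is fiber-preserving, $\imath = t_P^{-1}\circ\tau$ acts on the base by the very same involution. The proof of Proposition~\ref{prop:imath} already records this: it says $\imath$ fixes $O$ \emph{as a set} with a \emph{single fixed point} in $F_\infty$, and it is only $\imath^2$ (not $\imath$) that is shown to fix fibers as sets and hence to act on $X_\eta$. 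Your inference that $\imath|_{X_\eta}=[-1]$ conflates $\imath$ with $\imath^2$.

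This error is fatal for the rest of the argument. If $\imath$ were fiberwise inversion, the quotient $X/\imath$ would have generic fiber $\PP^1$, not an elliptic curve; but $\Jac(f)$ is an elliptic surface, so your own computation should have signalled the inconsistency. The paper's proof proceeds quite differently: $\imath$ interchanges the unramified fibers in pairs while fixing $O$, so $X/\imath$ inherits an \emph{elliptic} fibration with section over the quotient $\PP^1$. The substantive work is then entirely at the single ramified fiber $F_\infty$: one shows, using the fixed-point-freeness of $\tau$, that $\imath$ restricts to the identity on $F_\infty$ (resp.\ on the smooth locus of the identity component when $F_\infty$ is of type $I_{2n}$), so that the quotient fiber has the same type as the half-pencil on $S$. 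An Euler--Poincar\'e characteristic count then identifies $X/\imath$ as the rational elliptic surface with the same fibers as $S$, i.e.\ $\Jac(f)$. In short, $\imath$ is the deck transformation of a quadratic base change, not fiberwise inversion; that role is played instead by $\jmath=\imath\circ(-\id)$ in Section~\ref{s:twist}.
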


\begin{proof}
As the involution interchanges the unramified fibers and fixes $O$ as a set,
the quotient $X/\imath$ inherits an elliptic fibration with section induced from $O$.
It remains to study the ramified fiber $F_\infty$.

If $F_\infty$ is smooth, then the proof of Proposition \ref{prop:imath} shows
that $\imath$ restricts to an automorphism of $F_\infty$,
with the following two possibilities:
\[
\imath|_{F_\infty} = \pm \id.
\]
In fact, it is easy to see that $\imath|_{F_\infty} = \id$, since otherwise $\tau$ would fix
any point $Q_\infty\in F_\infty$ with $2Q_\infty=P|_{F_\infty}$.
But then the Euler-Poincar\'e characteristic reveals that $X/\imath$ is a rational elliptic surface,
and since it has exactly the same fibers as the Enriques surface $Y$
(including the half-pencil equalling $F_\infty$),
we deduce that $X/\imath= \Jac(f)$ as stated.

If $F_\infty$ is non-smooth, i.e.~multiplicative of type $I_{2n} \;\; (n>0)$,
then we number the components cyclically as usual, starting from the zero component $\Theta_0$ met by $O$
up to $\Theta_{2n-1}$.
Again we have seen in the proof of Proposition \ref{prop:imath} that $\imath$ restricts to an automorphism
of the smooth locus $\Theta_0^\#$ of $\Theta_0$;
more precisely, by \eqref{eq:0}, $\imath|_{\Theta_0^\#}$ either is the identity or inversion in $\Theta_0^\#\cong\mathbb G_m$.
Note that for $\tau$ to be fixed point free, $P$ intersects a different component than $\Theta_0$
so that $t_P$ induces a rotation of the fiber components.
Anyway, if
\[
\imath|_{\Theta_0^\#}\neq \id,
\]
then one checks that  $\tau$ either fixes a component
(as a set, but then it contains a fixed point), or it interchanges two adjacent components
(such that the intersection point is fixed).
Hence we infer that
\[
\imath|_{\Theta_0^\#}=\id,
\]
so that all fiber components are fixed by $\imath$ as sets,
and $P$ has to meet $\Theta_n$ for $\tau$ to have order two.
Subsequently, this implies that $X/\imath$ attains a fiber of type $I_n$, and we conclude as before.
\end{proof}

The corollary explains the title of this section:
$X$ is a quadratic base change of the Jacobian of the elliptic fibration \eqref{eq:to}
on the Enriques surface $S$.
It is instructive to distinguish
whether $P$ is two-torsion or not,
since in the first case, it will descend to $\Jac(f)$ (as it is invariant under the action of $\imath$)
while in the second case it is honestly anti-invariant for the action of $\imath$ by definition.

\begin{Example}[$R=A_5+2A_2$ cont'd] \rm
\label{ex3}
We illustrate the two cases above with possible configurations supporting the root type $R=A_5+2A_2$.
Arguing with the first isotropic vector from Example \ref{ex1},
Lemma \ref{lem:<2} implies that $\Jac(f)=X_{6321}$.
For the 2-torsion case, we find the following  configuration of $(-2)$-curves
with ramified $I_6$ fiber:
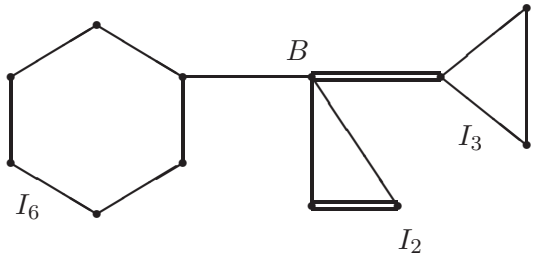
\begin{figure}[ht!]
\setlength{\unitlength}{.45in}
\begin{picture}(8,2.9)(3,0)
\thicklines


\multiput(4,2)(2,0){2}{\circle*{.1}}
\multiput(4,1)(2,0){2}{\circle*{.1}}
\put(4,2){\line(0,-1){1}}
\put(6,2){\line(0,-1){1}}
\put(5,0.4){\circle*{.1}}
\put(5,2.6){\circle*{.1}}


\put(4,2){\line(5,3){1}}
\put(5,0.4){\line(5,3){1}}
\put(4,1){\line(5,-3){1}}
\put(5,2.6){\line(5,-3){1}}

%

\put(7.5,2){\circle*{.1}}

\put(7.5,2){\line(-1,0){1.5}}


\put(9,2){\circle*{.1}}
\put(7.5,2.04){\line(1,0){1.5}}
\put(7.5,1.96){\line(1,0){1.5}}

\put(10,2.8){\circle*{.1}}
\put(10,1.2){\circle*{.1}}

\put(9,2){\line(5,4){1}}
\put(9,2){\line(5,-4){1}}

\put(10,2.8){\line(0,-1){1.6}}


\put(7.5,.5){\circle*{.1}}
\put(8.5,.5){\circle*{.1}}

\put(7.5,.46){\line(1,0){1}}
\put(7.5,.54){\line(1,0){1}}

\put(7.5,2){\line(0,-1){1.5}}
\put(7.5,2){\line(2,-3){1}}

%
\put(7.2,2.2){$B$}
\put(8.5,0){$I_2$}
\put(9.05,1.2){$IV$}
\put(4.05,.4){$I_6$}
%
\thinlines
%
%
\end{picture}
\caption{Configuration with multiple $I_6$ fiber supporting the root type $R=A_5+2A_2$}
\label{Fig:R}
\end{figure}

If  the smooth rational bisection $B$ does not produce a  two-torsion section, the picture looks as follows
(with a dashed line indicating that the respective fiber could also be ramified):
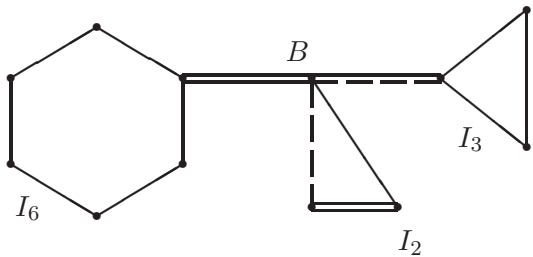
\begin{figure}[ht!]
\setlength{\unitlength}{.45in}
\begin{picture}(8,2.9)(3,0)
\thicklines


\multiput(4,2)(2,0){2}{\circle*{.1}}
\multiput(4,1)(2,0){2}{\circle*{.1}}
\put(4,2){\line(0,-1){1}}
\put(6,2){\line(0,-1){1}}
\put(5,0.4){\circle*{.1}}
\put(5,2.6){\circle*{.1}}


\put(4,2){\line(5,3){1}}
\put(5,0.4){\line(5,3){1}}
\put(4,1){\line(5,-3){1}}
\put(5,2.6){\line(5,-3){1}}

%

\put(7.5,2){\circle*{.1}}

\put(7.5,2.04){\line(-1,0){1.5}}
\put(7.5,1.96){\line(-1,0){1.5}}


\put(9,2){\circle*{.1}}
\put(7.5,2.04){\line(1,0){1.5}}
\put(7.5,1.96){\line(1,0){1.5}}

\put(10,2.8){\circle*{.1}}
\put(10,1.2){\circle*{.1}}

\put(9,2){\line(5,4){1}}
\put(9,2){\line(5,-4){1}}

\put(10,2.8){\line(0,-1){1.6}}


\put(7.5,.5){\circle*{.1}}
\put(8.5,.5){\circle*{.1}}

\put(7.5,.46){\line(1,0){1}}
\put(7.5,.54){\line(1,0){1}}

\multiput(7.5,1.96)(0,-.4){4}{\line(0,-1){.3}}
\put(7.5,2){\line(2,-3){1}}

%
\put(7.2,2.2){$B$}
\put(8.5,0){$I_2$}
\put(9.05,1.2){$IV$}
\put(4.05,.4){$I_6$}
%
\thinlines
%
%
\end{picture}
\caption{Configurations supporting the root type $R=A_5+2A_2$ without multiple $I_6$ fiber}
\label{Fig:2}
\end{figure}

\end{Example}

\subsection{Non-existence of $R=A_5+A_2+2A_1$}
\label{ss:no}

Assume there is a singular Enriques surface $S$ supporting the root type $R=A_5+A_2+2A_1$.
Consider the isotropic vector $E$ from Table \ref{T1}.
By Lemma \ref{lem:<2}, it induces a genus one fibration with $\Jac(f)=X_{6321}$.
The smooth rational bisection $B$ obtained from the last orthogonal summand $A_1$ in $R$
meets the singular
fibers as follows (where dashed lines indicate that the fibre may be multiple):

\begin{figure}[H]
\setlength{\unitlength}{.45in}
\begin{picture}(8,2.9)(3,0)
\thicklines


\multiput(4,2)(2,0){2}{\circle*{.1}}
\multiput(4,1)(2,0){2}{\circle*{.1}}
\put(4,2){\line(0,-1){1}}
\put(6,2){\line(0,-1){1}}
\put(5,0.4){\circle*{.1}}
\put(5,2.6){\circle*{.1}}


\put(4,2){\line(5,3){1}}
\put(5,0.4){\line(5,3){1}}
\put(4,1){\line(5,-3){1}}
\put(5,2.6){\line(5,-3){1}}

%

\put(7.5,2){\circle*{.1}}

\put(7.5,2.04){\line(-1,0){1.5}}
\multiput(7.5,1.96)(-.4,0){4}{\line(-1,0){.3}}


\put(9,2){\circle*{.1}}
\put(7.5,2.04){\line(1,0){1.5}}
\put(7.5,1.96){\line(1,0){1.5}}

\put(10,2.8){\circle*{.1}}
\put(10,1.2){\circle*{.1}}

\put(9,2){\line(5,4){1}}
\put(9,2){\line(5,-4){1}}

\put(10,2.8){\line(0,-1){1.6}}


\put(7.5,.5){\circle*{.1}}
\put(8.5,.5){\circle*{.1}}

\put(7.5,.46){\line(1,0){1}}
\put(7.5,.54){\line(1,0){1}}

\put(7.46,2){\line(0,-1){1.5}}
\multiput(7.54,2)(0,-.4){4}{\line(0,-1){.3}}

%
\put(7.2,2.2){$B$}
\put(8.5,0){$I_2$}
\put(9.05,1.2){$IV$}
\put(4.05,.4){$I_6$}
%
\thinlines
%
%
\end{picture}
\end{figure}

On the K3 cover $X$, the bisection splits into disjoint sections $O, P$ meeting
always the same fibre component unless the fibre is ramified (such that opposite components
are met).
Depending on the ramification, we will now work out a primitive sublattice of $\NS(X)$
which will prove useful in establishing a contradiction to our assumption.

If no singular fibre ramifies,
then $P$ has height $h(P)=4$, and we obtain an orthogonal decomposition
\begin{eqnarray}
\label{eq:N_1}
\NS(X) \supseteq N_1 = U + (2A_5+2A_3+2A_1)' + \langle -4\rangle
\end{eqnarray}
where the primitive closure of the root lattice in the middle is an index 6 overlattice
of rank $16$ and determinant $36$
generated by the class of one of the 6-torsion sections, say $Q$.
If $N_1$ were not primitive, then  $P+nQ$ would be $2$-divisible for some $n\in\{0,\hdots,5\}$.
Here $n=2, 4$ are equivalent to the case $n=0$
while odd $n$ can be ruled out since then $P+nQ$ would meet the $I_2$ fibers
at the non-identity component, thus not allowing for 2-divisibility.
Hence we may assume that $P=2P_0$
where $P_0.O=0$ and $h(P_0)=1$ by construction.
By \cite{ShMW}, the height implies that $P_0$ meets both $I_6$ fibers at
the component opposite to that met by $O$.
Since this is met by the two-torsion section $3Q$ as well,
we obtain the height pairing $\langle P_0, 3Q\rangle \leq 2 - 2\cdot\frac 32 \leq -1$,
giving the desired contradiction (since the height pairing with a torsion section always evaluates as zero).

If the $I_2$ fibre ramifies, then $P$ meets the $I_4$ fiber on $X$ at the opposite component, so $h(P)=3$,
and we obtain a rank $20$ sublattice $N_2\subseteq \NS(X)$ of determinant $108$
by \cite[(11.22)]{SSh}.
If this were not primitive, then $P$ would have to be two-divisible as above,
but then we obtain a contradiction from
\[
h(P_0) = 4 - \frac 34 - \begin{cases} 0\\\frac 32\end{cases} - \begin{cases} 0\\\frac 32\end{cases}
\neq \frac 34 = \frac 14\, h(P).
\]

Finally, if the $I_6$ fibre were to ramify, then $P$ would meet the opposite component of $I_{12}$
on $X$,
and $\langle P,3Q\rangle\leq -1$ as before.

In summary, $X$ always comes with a primitive embedding of $N_1$ or $N_2$ into $\NS(X)$.
Note that no matter what field $S$ and $X$ are defined on,
we can always specialize to the situation where they are defined over some finite field $\F_q$
(which we  increase, if necessary, to ensure that $\NS(X)$ is defined over $\F_q$).
Then $X$ is ordinary by \cite[Cor.~A.2]{KK},
and since the Tate conjecture holds for $X$ by \cite{ASD}, we infer $\rho(X)=20$.
Therefore the Artin--Tate conjecture\footnote{which is equivalent to the Tate conjecture by \cite{Milne};
for characteristic 2, this requires input from \cite{LLR}
which proves that not only in odd characteristic the size of the Brauer group is a square,
so that Milne's original argument also goes through in characteristic two.}
implies that $\det(\NS(X))$ is odd.
In the $N_2$ case, this gives a contradiction right away;
for $N_1$ one can argue with the $2$-length of the discriminant group $N_1^\vee/N_1$.
Presently this is at least $2$ (by the two orthogonal summands with even determinant in \eqref{eq:N_1}),
so $\NS(X)$ will have $2$-length at least $1$ (by the same kind of argument as for \cite[Thm. 6.1]{KS}
or \cite[Prop. 3.3]{RS}).
That is, $\det(\NS(X))$ is even, contradiction.
Thus there is no  singular Enriques surface
supporting the root type $R=A_5+A_2+2A_1$, as stated in Theorem \ref{thm}.

\section{Quadratic twist}
\label{s:twist}

We shall now extract conditions from the data developed in Section \ref{s:bc} and from Table \ref{T1}
and start to convert them into explicit equations towards the proof of Theorem \ref{thm}.
Continuing from Summary \ref{sum1}, we proceed as follows:
\begin{itemize}
\item
use Lemma \ref{lem:<2} to single out the possible fiber configurations
supporting a given rank $9$ root type $R$ (as in Example \ref{ex3});
\item
prove that there are reflections in fiber components mapping all curves from $R_0$
to fiber components (and not affecting the smooth rational bisection $B$); this amounts to a case-by-case analysis
paralleling \cite[\S 7]{S-Q-hom}
using standard properties of root lattices and Weyl groups
(cf.~the proof of Lemma \ref{lem:<2} for the argument for the root type $R=A_3+6A_1$).
\end{itemize}
Especially the second step allows us to predict exactly how the bisection $B$ intersects the singular fibers
(depending on their multiplicity, so there may be a few cases to distinguish as in Example \ref{ex3}).
Note that this directly carries over to information on the section $P$ on the K3 cover $X$.
In particular, we can determine whether $P$ is two-torsion of not.
We now turn to the problem of exhibiting explicit equations for the Enriques surfaces in question.

\subsection{Two-torsion case}
\label{ss:2t}

If the configuration determines $P$ as a two-torsion section,
then there is little left to do:
there always is a ramified reducible fiber (necessarily of multiplicative type),
so $X$ arises from $\Jac(f)$ by a quadratic base change ramified at this fiber.
Hence such K3 surfaces (and their quotient singular Enriques surfaces) occur in one-dimensional families
(depending on the free parameter of the base change).

\begin{Example}[$R=A_5+2A_2$ cont'd] \rm
\label{ex4}
In the two-torsion case from Example \ref{ex3}, there is ramification at the $I_6$ fiber,
so the base changes can be normalized to take the shape
\begin{eqnarray}
\label{eq:bcn}
t\mapsto \lambda s^2/(s-1) \;\;\;\; (\lambda \in K^\times).
\end{eqnarray}
By inspection of Figure \ref{Fig:R}, the resulting singular Enriques surfaces also support the root types $E_7+A_2$ and $A_7+A_2$.
\end{Example}

\begin{Summary}
\label{sum2}
We obtain three one-dimensional families of singular Enriques surfaces supporting the following root types:
$$
\begin{array}{rcc}
R \;\;\;\;\;\;  & \Jac(f) & \text{mult.~fiber}\\
\hline
E_8+A_1, D_8+A_1, A_8+A_1, A_7+2A_1 & X_{321} & I_2\\
E_7+A_2, A_7+A_2, A_5+2A_2 & X_{6321} & I_6\\
E_6+A_2+A_1 & X_{6321} & I_2\\
\end{array}
$$
\end{Summary}

\subsection{Non-torsion case}
\label{ss:non}

Here the section $P$ is really anti-invariant for the induced action of the deck transformation $\imath$
-- or equivalently, invariant for the composition $\jmath = \imath \circ (-\id)$
where $(-\id)$ denotes the involution of the generic fiber.
Hence $P$ descends to a section $P'$ on the minimal resolution of the quotient surface,
\[
X' = \widetilde{X/\jmath}.
\]
This is again an elliptic  K3 surface with the same fibers at $\Jac(f)$ except at the ramified fiber
(which generically is of Kodaira type $I_4^*$, the additional components accounting for the isolated fixed points
of $\jmath$ in the ramified fiber, cf.~the proof of Corollary \ref{cor:RES}).
$X'$ is often called  \emph{quadratic twist} of $\Jac(f)$.
In order to compute explicit equations, we arrange for the quadratic base change to be ramified at $\infty$
and thus take the shape
\begin{eqnarray}
\label{eq:bc}
s\mapsto s(s+1).
\end{eqnarray}
To this end, we fix a suitable (reducible) fiber at $t=0$ and move around the other fibers.
Starting from the models in Table \ref{T0}, this can be achieved by way of the M\"obius transformation
\begin{eqnarray}
\label{eq:ml}
t \mapsto \frac t{\mu t+\lambda} \;\;\;\; (\lambda\neq 0),
\end{eqnarray}
for instance, in agreement with the general fact that quadratic base changes of a given rational elliptic surface
come in two-dimensional families.
So let us assume that the rational elliptic surface $\Jac(f)$ has Weierstrass equation
\[
\Jac(f): \;\;\; y^2 + a_1xy + a_3y = x^3 + a_2x^2 + a_4x+a_6, \;\;\; a_i\in K[t], \;\; \deg(a_i)\leq i,
\]
with singular fiber moving around according to the M\"obius transformation \eqref{eq:ml}.
If $X$ arises from $\Jac(f)$ by the quadratic base change \eqref{eq:bc},
then one can directly compute the invariants for the involution $\jmath\in\Aut(X)$.
They result in the following Weierstrass equation of the quadratic twist $X'$:
\begin{eqnarray}
\label{eq:X'}
X': \;\;
y^2 + a_1xy + a_3y = x^3 + a_2x^2 + a_4x+a_6 + t(a_1x+a_3)^2.
\end{eqnarray}
Enter the section $P\in\MW(X)$ which ought be disjoint from $O$;
this property is often called integral and equivalent to $P=(U,V)$
for polynomials $U,V\in K[t]$ of degree at most $4$ resp.~$6$.
For $P'$, this translates as $P'=(U', V')$ of degree at most $2$ resp.~$3$.
There is an important additional condition for the top degree coefficient of $U'$:

\begin{Observation}
\label{obs:U'}
Write out the top degree coefficients
\[
U' = u_2t^2+\hdots, \;\;\;\; a_1 = a_{11}t+{a_{10}}, \;\;\;\; a_3 = a_{33}t^3+\hdots,
\]
depending on $\mu,\lambda$. Then $u_2=a_{33}/a_{11}$.
\end{Observation}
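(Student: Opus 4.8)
The plan is to compute the Weierstrass model of $X'$ directly and then track what happens to an integral section $P$ under the descent to $P'$. Recall from \eqref{eq:X'} that the quadratic twist $X'$ has equation
\[
y^2 + a_1xy + a_3y = x^3 + a_2x^2 + a_4x + a_6 + t(a_1x+a_3)^2,
\]
where the $a_i\in K[t]$ are the coefficients of $\Jac(f)$ after the M\"obius transformation \eqref{eq:ml}. First I would pin down how \eqref{eq:ml} affects the degrees: substituting $t\mapsto t/(\mu t+\lambda)$ and clearing denominators multiplies $a_i$ by $(\mu t+\lambda)^i$, so that $a_1$ becomes a polynomial of degree $\leq 1$ with top coefficient governed by $\mu$ and $a_3$ a polynomial of degree $\leq 3$ with top coefficient $a_{33}$ governed by $\mu^3$ (up to the original leading coefficients of the chosen $\Jac(f)$ from Table \ref{T0}). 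This is the bookkeeping that produces the symbols $a_{11}, a_{10}, a_{33}$ in the statement.

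Next I would write the integral section $P=(U,V)$ on $X$, with $U\in K[t]$ of degree $\leq 4$, and follow it through the quotient by $\jmath = \imath\circ(-\id)$. Since the base change is $s\mapsto s(s+1)$, the coordinate $t$ on the base of $X'$ pulls back to $s(s+1)$, so the descent of $P$ to $P'=(U',V')$ amounts to re-expressing $U$ as a function of $t=s(s+1)$; the invariance of $P$ under $\jmath$ is exactly what guarantees that $U$, a priori a polynomial of degree $\leq 4$ in $s$, is actually a polynomial of degree $\leq 2$ in $t$. Comparing leading terms, the coefficient $u_2$ of $t^2$ in $U'$ is the coefficient of $s^4$ in $U$. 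So the claim $u_2 = a_{33}/a_{11}$ reduces to identifying the top coefficient of $U$ over $X$.

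The key step, and the main obstacle, is to extract that top coefficient from the geometry of $X$ rather than by brute force. The point is that $X$ is the quadratic base change of $\Jac(f)$ ramified at the multiple fiber, and $P$ is one of the two sections into which the smooth rational bisection $B$ splits; its behaviour near $s=\infty$ (the ramified fiber $F_\infty$) is controlled by the local expansion of the Weierstrass model there. Homogenizing and looking at the fiber at infinity of \eqref{eq:X'}, the defining equation degenerates and the section $P$ passes through a component forced by the combinatorics worked out in Section \ref{s:twist}; reading off the leading asymptotics $x\sim u_2^2 s^4$, $y\sim u_2^3 s^6$ and balancing against the dominant terms $a_3 y$ and $t(a_1 x)^2 \sim a_{11}^2 u_2^4 s^{10}$ against $x^3\sim u_2^6 s^{12}$ in the limit $s\to\infty$ pins $u_2$ down. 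The calculation is short once set up correctly, but getting the normalization of $F_\infty$ and the matching of leading terms right — in particular seeing that only $a_{33}$ and $a_{11}$ survive in the top-degree balance — is where care is needed; I would carry it out on the running example $X_{6321}$ first to fix signs and conventions before stating it in general.
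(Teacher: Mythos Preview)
Your proposal takes a long detour through the K3 cover $X$ and an asymptotic balance, whereas the paper's argument is a one-line degree count performed directly on $X'$. Substitute $P'=(U',V')$ into \eqref{eq:X'}; with $\deg U'\leq 2$, $\deg V'\leq 3$ and $\deg a_i\leq i$, every term on both sides has degree $\leq 6$ in $t$ \emph{except} the twist term $t(a_1U'+a_3)^2$, which alone contributes in degree $7$. In characteristic $2$ its top coefficient is $(a_{11}u_2+a_{33})^2$, and the vanishing of this gives $u_2=a_{33}/a_{11}$. No passage through $X$, no limits, no worked example are needed.

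As written, your asymptotic balance is also incorrect. With $U'=u_2t^2+\cdots$ and $t=s(s+1)$ one has $x\sim u_2 s^4$ (not $u_2^2 s^4$), hence $x^3\sim u_2^3 s^{12}$ and $t(a_1x)^2\sim a_{11}^2u_2^2 s^{14}$, so the exponents you quote are off and the two terms you propose to ``balance'' live in different degrees. More structurally, on $X$ the extra summand $t(a_1x+a_3)^2$ is absent, so the top-degree balance of the Weierstrass equation there involves $v_6$ together with all leading $a_i$-coefficients and does not isolate $u_2$ in terms of $a_{11},a_{33}$ alone. If you really want to argue on $X$, the clean way is not an asymptotic balance but the anti-invariance $\imath^*P=-P$: comparing the $s^6$-coefficients of $V(s+1)= -V(s)-a_1U-a_3$ immediately yields $a_{11}u_2+a_{33}=0$. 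That is equivalent to the paper's computation, just pulled back along the base change; your steps (1) and (2) are then unnecessary scaffolding, and step (3) as stated does not go through.
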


\begin{proof}
With the above degree conditions on $U', V'$, it is clear that $a_{11}u_2$ and $a_{33}$ are the only degree $7$ coefficients
of the equation
%
obtained from  \eqref{eq:X'} upon substituting $P'$.
Hence solving for the equation to vanish identically leads to the above relation.
\end{proof}

\begin{Remark}
\label{rem:a_1}
Observation \ref{obs:U'} also implies that $a_1\not\equiv 0$.
\end{Remark}

Observation \ref{obs:U'} also ensures that $P'$ meets a far simple component of the (generic) $I_4^*$ fiber of $X'$ at $\infty$
-- and leads to a degree $6$ polynomial equation in $t$ as explained in the proof.
Then we either try to solve directly for this equation to vanish identically
or throw in some additional information, for instance about fibers met non-trivially by $P'$
(inferred from the intersection pattern of the smooth rational bisection $B$
with the singular fibers  on the Enriques surface $S$, or from torsion sections present on $X$
as we shall exploit in the next sections).

\section{Explicit equations for root type  $R=A_5+2A_2$}
\label{s:eqns}
\label{s:ex}

We illustrate the approach in the non-torsion case as sketched in \ref{ss:non}
by elaborating on our usual exemplary root type $R=A_5+2A_2$.
Consider an Enriques surface $S$ with elliptic fibration \eqref{eq:to}  induced by the first isotropic vector in Example \ref{ex1}.
Recall that $\Jac(f)=X_{6321}$ with the possible fiber configurations  from Example \ref{ex3}.
Since we have already settled the case of a multiple $I_6$ fiber in Example \ref{ex4}, we shall assume that the $I_6$ fiber
is unramified.
Thus it is safe to locate it at $t=0$ as in Table \ref{T0}.

For starters, we restrict to the case where the fiber of type $I_2$ is unramified ($\mu\neq 1$ in \eqref{eq:ml}).
Presently, we could set out to calculate the section $P'$ directly on the quadratic twist $X'$,
using the condition that it meets the $I_2$-fiber non-trivially,
but it turns out to be even more beneficial to work on the K3 cover $X$ itself
and just remember that $P$ is induced from $X'$.
In particular, this implies that $U$ is invariant under the deck transformation
\begin{eqnarray}
\label{eq:deck}
\imath: \;\;\; s\mapsto s+1
\end{eqnarray}
of \eqref{eq:bc}
while $V$ decidedly is not, for otherwise $P$ would be induced from $\Jac(f)$.

We shall facilitate that $X$ admits a 3-torsion section at $Q=(0,0)$,
precisely~it takes the general shape
\begin{eqnarray}
\label{eq:3t}
X: \;\;\; y^2 + a_1xy + a_3y = x^3
\end{eqnarray}
(with reducible fibers at the zeroes of $a_3$, presently of Kodaira types $I_6$ and $IV$).
It follows from divisibility considerations in $K[s]$,
or more general from the theory of Mordell--Weil lattices \cite{ShMW},
that $P.Q=2$. By the given shape of $Q$, this implies that $U$ and $V$ share a common factor $g$ of degree two.
Since $P$ meets the fibers of type $I_6$ and $IV$ at the identity component,
$g$ is relatively prime to $a_3$, and we infer from vanishing orders that in fact $g^3\mid V$.
Recalling that $V$ is not invariant under $\imath$, we deduce that
\[
g(s) \neq g(s+1) = h(s).
\]
On the other hand, the invariance of $U$ under $\imath$ leads, after absorbing some factor into $g$ if necessary, to
\[
U = gh, \;\;\; V = \nu g^3 \;\;\; (\nu \in K^\times).
\]
But then comparing the substitution into \eqref{eq:3t} with the relation obtained from $\imath^*P=-P$,
we read off $\nu=1$.
We write out $g=g_2s^2+g_1s+g_0$ (with $g_2+g_1\neq 0$, for otherwise $g=h$)
and solve for the substitution into \eqref{eq:3t}
\begin{eqnarray}
\label{eq:gh}
g^3 + a_1gh+ a_3 = h^3
\end{eqnarray}
to vanish identically, taking into account additionally that the $I_2$ fibers are met non-trivially.
The top coefficients of \eqref{eq:gh} directly give $g_2=1$ and $g_1=1/\mu$.
Then the $I_2$ fiber condition implies $\lambda = g_0\mu(g_0\mu+\mu+1)$
whence
\[
g_0^2\mu^3+g_0\mu^2(\mu+1)+(\mu+1)^2=0.
\]
This rational curve is parametrized by
\[
g_0 = w(\mu+1)/\mu, \;\;\; \mu = 1/(w^2+w).
\]
All in all, this gives rise to a one-dimensional family of singular Enriques surfaces
(also displayed in Table \ref{T2} after eliminating the symmetry $u=w^2+w$),
supporting the root type $A_5+2A_2$ as in Figure~\ref{Fig:2}.

If the $I_2$ fiber were to ramify (i.e.~$\mu=1$), then the same approach would go through,
locating the fiber at $\infty$ and
yielding $g_2=g_1=1$ and thus $g=h$, contradiction.
This completes the analysis of the singular Enriques surfaces
supporting the root type $R=A_5+2A_2$.

\begin{Remark}
A similar analysis can be carried out starting from the second isotropic vector from Example \ref{ex1}.
In fact, the corresponding systems of equations are straightforward to solve,
thanks to the fibers met non-trivially.
In case $\Jac(f)=X_{431}$, we directly obtain
\[
U'=\mu s^2 \;\;\; \text{ and } \;\;\;
V' = \lambda s^3 \;\;\; \text{ (up to exchanging $P'$ and $-P'$)}
\]
which subsequently leads to a one-dimensional family parametrized by $\lambda=\sqrt\mu^3$.
For $\Jac(f)=X_{3333}$, however,  the resulting equations
are a little more complicated to display.

In either case, the  families of Enriques surfaces resulting from both approaches are easily related.
This can be achieved by singling out the multiple $I_6$ fiber in the diagram
of $(-2)$-curves underlying the configuration
from Example \ref{ex3}
(or a $IV^*$ fiber with smooth rational bisection and disjoint $A_2$ in Figure \ref{Fig:R}),
or as part of a more general pattern explored in the context
of Enriques surfaces with four cusps in \cite{RS} (over $\C$,
but the arguments carry over to characteristics $\neq 3$).
\end{Remark}

\section{One-dimensional families}

Having treated the exemplary root type $R=A_5+2A_2$  in full detail,
we shall now state the main classification result needed to prove Theorem \ref{thm}.

 \begin{Theorem}
 \label{thm1}
 Let $S$ be a singular Enriques surface in characteristic two supporting a root lattice $R$ of rank $9$.
 Consider the section $P$ on the K3-cover of $S$ obtained from the data in Table \ref{T1}.
 If $P$ is not two-torsion,
then $S$ and $R$ appear in Table \ref{T2}.
 \end{Theorem}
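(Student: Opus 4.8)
The plan is to carry out the non-torsion analysis of Section~\ref{s:twist} uniformly across all the root types from Theorem~\ref{thm} that are not already covered by the two-torsion families of Summary~\ref{sum2}. First I would go through Table~\ref{T1} and, for each remaining root type $R$, apply Lemma~\ref{lem:<2} (as in Example~\ref{ex3}) to pin down the finitely many fiber configurations compatible with $R_0$ and Lang's classification in Table~\ref{T0}. For each surviving configuration I then run the reflection argument of Summary~\ref{sum1}: show that the curves of $R_0$ can be moved onto fiber components without disturbing the bisection $B$, and read off exactly how $B$ meets each singular fiber. This is the bookkeeping backbone, and it tells us which of the extremal rational elliptic surfaces $\Jac(f)$ from Table~\ref{T0} occurs, which fiber is (or may be) ramified, and the intersection profile of the section $P$ on the K3 cover $X$ -- in particular confirming in each case that $P$ is indeed not two-torsion, so that the quadratic twist picture of~\ref{ss:non} applies.

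Next I would fix, for each such $(\Jac(f),\text{configuration})$, an explicit Weierstrass model as in Table~\ref{T0} with a reducible fiber placed at $t=0$, apply the M\"obius transformation \eqref{eq:ml} and form the quadratic twist \eqref{eq:X'}, or equivalently work directly on the base-changed K3 cover $X$ via \eqref{eq:bc} and the deck transformation \eqref{eq:deck}, exploiting torsion sections of $\Jac(f)$ whenever available (as the $3$-torsion section was used in Section~\ref{s:eqns}). Writing $P=(U,V)$ integral, the conditions are: $U$ invariant under $\imath$ but $V$ not (so that $P$ genuinely comes from the twist and is non-torsion), Observation~\ref{obs:U'} fixing the leading coefficient of $U'$, the prescribed non-trivial intersections of $P$ with the reducible fibers forced by $B$, and the requirement that substituting $P$ into the Weierstrass equation vanish identically. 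In each case this is a small polynomial system in the coefficients of $U,V$ and in $\lambda,\mu$; solving it and checking that the resulting parameter curve is rational and defined over $\F_2$ produces exactly one entry of Table~\ref{T2}. I would organize the write-up as a case distinction by $\Jac(f)$ (following the groupings one reads off from Table~\ref{T1}), treating the representative computation for $R=A_5+2A_2$ in Section~\ref{s:eqns} as the template and only indicating the modifications needed for the other configurations.

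The main obstacle is uniformity: the polynomial systems, while individually routine, vary in shape from case to case (presence or absence of a convenient torsion section, whether the extra reducible fiber is forced or floating, whether the ramified fiber can be taken at $t=0$ or must be put at $\infty$, multiplicity-one versus multiplicity-two intersections of $B$), so the real work is checking that the degree bounds on $U,V$ together with the forced incidences always leave just enough freedom to produce a one-parameter family and no more -- i.e.\ that the system is neither overdetermined (which would contradict existence) nor underdetermined. A secondary subtlety, genuinely characteristic-two in flavour, is ensuring that the section we construct really is disjoint from $O$ and meets the claimed components, since fiber types degenerate relative to $\C$ and the naive Mordell--Weil height computation must be cross-checked against the actual special fibers of $X'$. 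Once these checks are in place for every root type in Theorem~\ref{thm}'s list not appearing in Summary~\ref{sum2}, the family and its defining equation land in Table~\ref{T2}, completing the proof; the cases $R=A_8+A_1$ and $R=A_5+2A_2$ naturally yield two moduli components each, matching part~(ii) of Theorem~\ref{thm}.
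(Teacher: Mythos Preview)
Your proposal is correct and follows essentially the same approach as the paper: the paper treats $R=A_5+2A_2$ in full detail in Section~\ref{s:eqns} as a template and then states that all other cases follow the same line of argument, which is exactly the case-by-case scheme you outline (Lemma~\ref{lem:<2} plus Lang's classification to fix $\Jac(f)$ and fiber configurations, reflection argument to locate $B$, then solving the polynomial system on the twist~\eqref{eq:X'} or the cover via~\eqref{eq:3t}--\eqref{eq:gh}). One minor organizational point: your final sentence folds the two-component claim for $A_8+A_1$ and $A_5+2A_2$ into Theorem~\ref{thm1}, but in the paper that distinction (that the torsion and non-torsion families are genuinely different) is deferred to Lemma~\ref{lem:comp} via a discriminant comparison on the K3 covers, not established within the proof of Theorem~\ref{thm1} itself.
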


 Table  \ref{T2}
 involves the elliptic fibration \eqref{eq:to} induced by the isotropic vector from Table \ref{T1}
 (or its Jacobian), and
 the $x$-coordinate $U'$ of the section $P'\in\MW(X')$ for the quadratic twist $X'$
 which in turn is determined by the parameters $\mu, \lambda$ entering
in \eqref{eq:ml}.
Of course, we always have to exclude a few values for $(\mu,\lambda)$
where the Enriques surfaces degenerate, but we omit the details for brevity.

By default, we usually start from the Weierstrass form of $\Jac(f)$ given in Table \ref{T1}
though in one instance, the equations look much nicer
when starting from the other affine standard coordinate $v=1/t$ of $\PP^1$
(to which we then apply \eqref{eq:bc} and  \eqref{eq:ml} analogously).

 \begin{table}[ht!]
 $$
 \begin{array}{|lccc|}
 \hline
 \text{root type} & {\Jac(f)} & \text{section:} \, U' & \text{quadr.~twist:} \, (\mu, \lambda)\\
 \hline
 A_9 & X_{8211} & t(t +1/\mu^2) & (\mu,1/\mu)\\
 A_8 + A_1 & X_{6321} & t(t+(\mu+1)/\mu^2) &
(\mu,(\mu+1)/\mu)\\

A_6+A_2+A_1 & X_{6321} &
t(t+(\mu+1)/\mu^2) &
(\mu,1/(\mu(\mu+1)))\\

A_5+A_4 & X_{6321} &
\mu^2v(v+\mu^2+\mu) &
(\mu,\mu^3)\\

A_5+A_3+A_1 &
X_{141} &
\text{ as } E_6+A_3 &\\
%
 A_5+2A_2
&
X_{6321} &
{t^2 + u(u+1)t+u(u+1)^2}
&
(1/u, (u+1)^2/u)
\\
 2A_4+A_1 & X_{5511} &
\multicolumn{2}{l|}{(u^2+u+1)^2t^2+(u^4+u+1)(u+1)^2u^2t+(u^4+u+1)^2(u+1)^2u^2}
\\
&&\multicolumn{2}{r|}
{
\left(\frac{(u^2+u+1)^2}{(u+1)^2u^2}, \frac{(u^4+u+1)^3}{(u+1)^4u^4}\right)
}
\\
 3A_3 & X_{9111}& \text{ as } D_9
& \\

A_3+3A_2
& X_{3333} & ((\lambda^6+1)t^2+\lambda^3t+\lambda^6)/\lambda^4
& (1/\lambda^2,\lambda)
\\
D_9 & X_{9111} & \lambda^2t^2 &
(1/\lambda^2, \lambda)
\\

 D_6+A_3 & X_{222} & \text{ as } D_9 &\\

 D_5+A_4 & X_{5511} & u^2t^2/(u+1)^2
 & (u^2, u^5/(u+1))
 \\
 E_6+A_3 & X_{141} &
 \mu^2t
&(\mu,\mu^2)
\\
 \hline
 \end{array}
 $$
 \caption{Singular Enriques surfaces for 13 maximal root types} 
 \label{T2}
 \end{table}

 \subsection{Proof of Theorem \ref{thm1}}

 Since we have treated one case in full detail
 and all others follow the same line of argument,
 we omit the details of the proof of Theorem \ref{thm1} for space reasons
 (except for the  case missing from the proof of Proposition \ref{prop:18no},
 to be covered separately in Section \ref{s:triple}).

 \begin{Remark}
 Some moduli components from
 characteristic zero cease to exist in characteristic two,
 even though the root type itself may still be supported on some singular Enriques surfaces.
 Those components are ruled out by the direct calculations,
 or alternatively, by more structural arguments as those involved in the proof of Proposition \ref{prop:18no}.
 We illustrate this by the following example.
 \end{Remark}

 \begin{Example}[$R=D_8+A_1$] \rm
 For the root type $R=D_8+A_1$, the isotropic vector from Table \ref{T1}
 leads to $R_0=A_7+A_1$, embedding into the fiber configurations
 \[
 \tilde A_7+\tilde A_1, \;\;\; \tilde E_7+\tilde A_1, \;\;\; \tilde E_8.
 \]
 While the last configuration is not compatible with a smooth rational bisection meeting some simple fiber component
with multiplicity one, the first configuration can only be ruled out by direct computation.
Alternatively, consider the isotropic vector $E'=d_2^\vee+h$.
This has $R_0'=D_6+2A_1$.
Using Lemma \ref{lem:3no} and the above argument for $\tilde E_8$,
one immediately derives the fiber configuration $\tilde E_7+\tilde A_1$,
and then a height argument shows that there is a ramified $I_2$ fiber
and a two-torsion section involved, i.e.~we are in the first family of Summary \ref{sum2}.
 \end{Example}

 \begin{Remark}
Some computer algebra systems experience surprising difficulties
 in characteristic two calculations (notably factorization of polynomials,
 but also basic simplifications);
 fortunately, the present problem always provides a sanity check
 when verifying that the computed section $P'$ indeed lies on the quadratic twist $X'$.
 \end{Remark}

 \begin{Remark}
Similar ideas can be applied to study singular Enriques surfaces
with finite automorphism group (see recent work of Martin \cite{Martin}).
\end{Remark}

%
%
%
%

\section{Moduli components}
\label{s:comp}

With Theorem \ref{thm1} and Summary \ref{sum2} at our disposal,
the proof of Theorem \ref{thm} is almost complete.
This section proves statement \textit{(ii)} about the moduli components
for root types
$$R=A_8+A_1, \;\;\; A_5+2A_2.$$
This amounts to verifying that the families
exhibited in Table \ref{T2} resp.~Summary \ref{sum2}
are indeed distinct.
We shall prove the following slightly stronger statement:

\begin{Lemma}
\label{lem:comp}
Let $R=A_8+A_1$ or $A_5+2A_2$.
There are two distinct families of K3 covers of singular Enriques surfaces supporting $R$.
\end{Lemma}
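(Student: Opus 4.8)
The plan is to exhibit, for each of the two root types, two genuinely different geometric constructions of the K3 cover and then show that the resulting families cannot be identified. For $R=A_8+A_1$, the two moduli components come from the two isotropic vectors recorded in Table \ref{T1}: the first has $\Jac(f)=X_{6321}$ with $R_0=A_5+A_2+A_1$ and leads (via a two-torsion section, Summary \ref{sum2}) to a base-change family ramified at a multiplicative fiber, while the second has primitive closure $E_8+A_1$, again with $\Jac(f)=X_{321}$ and a ramified $I_2$ fiber but now producing a different half-pencil. Concretely, I would first run the machinery of Sections \ref{s:bc}--\ref{s:twist} on each isotropic vector to produce the two families explicitly (one appears in Table \ref{T2} with $\Jac(f)=X_{6321}$, the other in Summary \ref{sum2} with $\Jac(f)=X_{321}$), so that we have honest Weierstrass equations for the two families of K3 covers $X$.

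Second, I would argue that these two families are distinct by finding a discrete invariant of the K3 cover that separates them. The cleanest such invariant is the configuration of singular fibers (with multiplicities) of the induced elliptic fibration on $X$, equivalently the frame lattice / the root sublattice of $\NS(X)$ generated by fiber components together with the torsion of the Mordell--Weil group: a base change of $X_{6321}$ produces fibers of types coming from doubling $I_6,IV,I_2$, whereas a base change of $X_{321}$ produces fibers from doubling $III^*,I_2$. Since these fiber configurations on $X$ are not isomorphic (they have different numbers of components, or different Kodaira types, or different $\MW$-torsion), the two one-dimensional families of K3 surfaces are disjoint; a fortiori so are the families of Enriques quotients. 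For $R=A_5+2A_2$ the same strategy applies verbatim, this time comparing the non-torsion family over $X_{6321}$ from Section \ref{s:ex} (Figure \ref{Fig:2}) with the two-torsion family from Example \ref{ex4} (Figure \ref{Fig:R}): in the first the $I_6$ fiber is unramified, in the second it is the ramified fiber, so the elliptic fibration \eqref{eq:to} on $S$ has a half-pencil of different fiber type, and correspondingly $X$ carries an elliptic fibration with different singular fibers over $\infty$. One can also distinguish them by whether $P$ (equivalently $B$) is two-torsion, which is precisely the dichotomy of Section \ref{s:twist}.

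Third, to be safe I would also check that within each family the generic member does not coincide with a member of the other family by a change of elliptic fibration; here one invokes that both families are one-dimensional and that the discrete invariant (fiber configuration of the distinguished half-pencil, or more robustly the isometry class of the sublattice of $\Num$ spanned by $R$ together with the half-pencil class and the bisection) is constant along each family and differs between the two. This rules out a fibered isomorphism; a non-fibered one would have to send the rank-nine configuration to another such configuration, but the two isotropic vectors were chosen inside the fixed embedding $R\hookrightarrow\Num(S)$ of Proposition \ref{prop:isotropic}, and the associated lattice data are not exchanged by any isometry of $\Num(S)$ preserving $R$.

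The main obstacle I expect is the last point: merely writing two different-looking Weierstrass equations does not prove the families are distinct, so the real work is pinning down an invariant that is provably constant in families and provably different for the two constructions. The fiber-configuration-of-$X$ argument should suffice because $X_{6321}$ and $X_{321}$ (resp.~the ramified versus unramified position of the $I_6$ fiber) yield Kodaira types that simply cannot match up after a quadratic base change; verifying this is a short case check using Table \ref{T0} and the base-change rules for Kodaira fibers, and is where I would spend the bulk of the proof.
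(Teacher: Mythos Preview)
Your argument has a genuine gap. The invariant you propose --- the singular fiber configuration of the induced elliptic fibration on $X$, or equivalently whether the section $P$ is two-torsion --- is an invariant of the \emph{fibered} K3 surface, not of the abstract K3 surface. A K3 surface with $\rho\geq 19$ typically carries many non-isomorphic elliptic fibrations, so nothing prevents the same underlying $X$ from arising as a quadratic base change of $X_{321}$ via one fibration and of $X_{6321}$ via another (or, in the $A_5+2A_2$ case, as a base change of $X_{6321}$ with the $I_6$ fiber ramified via one fibration and unramified via another). Your third paragraph tries to exclude this, but the claim that an abstract isomorphism ``would have to send the rank-nine configuration to another such configuration'' is unjustified: an isomorphism $X_1\cong X_2$ only induces an isometry of N\'eron--Severi lattices, with no reason to respect any particular set of $(-2)$-curves. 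And the appeal to isometries of $\Num(S)$ preserving $R$ is beside the point, since the question is about the K3 covers, and for $R=A_5+2A_2$ the primitive closure $R'=E_7+A_2$ is the same for both families anyway.

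The paper's proof sidesteps this by using an invariant that is intrinsic to the abstract K3 surface: the discriminant of $\NS(X_\eta)$. The crucial preliminary observation is that a singular Enriques surface in characteristic two can never have a supersingular K3 cover, so $\rho(X_\eta)=19$ for the generic member of each one-dimensional family. One then computes $\disc\NS(X_\eta)$ from the fibration data (fiber components, torsion sections, and the height $h(P)$ of the extra section, together with a check that no further divisibilities occur in $\MW$), obtaining $12$ versus $108$ for $R=A_5+2A_2$. Since the discriminant does not depend on any choice of fibration, this separates the families. Your fiber-configuration data is exactly what feeds into this discriminant computation, so you are close; what is missing is the passage from the fibration-dependent description to the intrinsic lattice invariant.
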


\begin{proof}
Compared to what had to be done in \cite{S-Q_l}, especially in characteristic $3$,
the arguments are greatly simplified thanks to the  fact
that a singular Enriques surface in characteristic two cannot have a supersingular K3 surface as universal cover;
that is,
\[
\rho(X)\leq 20
\]
(and, in fact, the height is one by \cite[Cor.~A.2]{KK}).
In turn, this implies that a generic member $X_\eta$ of either of the one-dimensional families of K3 surfaces at hand
has
\[
\rho(X_\eta)=19
\]
(since it is provided with plenty of smooth rational curves from the Enriques surface).
We continue by comparing the discriminants $d$ of $\NS(X_\eta)$ using \cite[(11.22)]{SSh}.
For $R=A_5+2A_2$, this returns once $d=12$ and the other time $d=108$.
To see this, it suffices to check that there cannot be further torsion sections than the present cyclic group
$\Z/6\Z$,
and that for the second family, the section $P$ mapping to the smooth rational bisection $B\subset Y$ generically
has height   $h(P)=3$.
At the same time, neither $P$ nor its translates by torsion sections
may be $2$-divisible since then the height would result in $3/4$
which is not compatible with the contraction terms in the height pairing.
Similarly, if $P$ were $3$-divisible, i.e.~$P=3Q$ then $h(Q)=1/3$
could a priori be accommodated by certain intersection patterns,
but each case would lead to a contradiction by calculating that the height pairing
with some two- or six-torsion section would be negative
(as opposed to being zero).
Finally, translating $P$ by some torsion section does not make a difference
as the two-torsion section itself is $3$-divisible and the sections of higher torsion order
cause the $I_3$ fibers to be met non-trivially,
so that there cannot be any $3$-divisibility at all.

The argument for $R=A_8+A_1$ is similar, so we leave the details to the reader.
\end{proof}

\begin{Remark}
Alternatively, one could argue with Nikulin's root invariant to conclude that
the given families of Enriques surfaces are distinct (cf.\ \cite{Nikulin}).
For the families from Summary \ref{sum2},
this returns $(E_8+A_1, \{0\})$ resp. $(E_7+A_2, \{0\})$;
the corresponding families in Table \ref{T2} have root invariants
$(A_8+A_1, \Z/3\Z)$ resp. $(A_5+2A_2, \Z/3\Z)$.
\end{Remark}

\section{Classical Enriques surfaces}
\label{s:classical}

In this section, we turn our attention to the objects of Theorem \ref{thm2},
namely \textbf{classical Enriques surfaces} $S$.
Here the striking difference is that the universal cover \eqref{eq:K3} of $S$ is no longer smooth,
but it is K3-like in the sense that
its dualizing sheaf is trivial.
Although $X$ need not even be normal,
generally it has 12 isolated $A_1$ singularities only (cf.~\cite{EHSB}).

\begin{Example} \rm
\label{ex:12A1}
The latter situation  persists if $S$ admits an elliptic fibration without additive fibers;
more precisely, in this situation there are only singular fibers of Kodaira type $I_n$,
with indices adding up to the Euler--Poincar\'e characteristic $12$,
and $X$ inherits a fibration with the same fibers, but with surface singularities
of type $A_1$  at the 12 nodes of the fibers.
The minimal resolution
\begin{eqnarray}
\label{eq:Xtilde}
\tilde X \to X
\end{eqnarray}
gives a supersingular K3 surface.
\end{Example}

In contrast, if $S$ admits a quasi-elliptic fibration (another central case for our considerations to follow),
then $X$ always is non-normal as it is singular along the cuspidal curve.

The proof of Theorem \ref{thm2} proceeds in three steps:
\begin{enumerate}
\item[\rm (1)]
we realize a number of root types on Enriques surfaces with finite automorphism group
as classified in \cite{KKM};
\item[\rm (2)]
we realize almost all other root types on Enriques surfaces admitting certain quasi-elliptic fibrations
following Dolgachev--Liedtke;
\item[\rm (3)]
we rule out the remaining root type $R=4A_2+A_1$ by adapting the techniques
for singular Enriques surfaces from the previous sections.
\end{enumerate}

\subsection{Maximal root types on Enriques surfaces with finite automorphism group}
\label{ss:finite}

This section follows closely in spirit the approach from \cite{HKO}
to realize maximal root types on Enriques surfaces with finite automorphism group
-- only that the classification for classical Enriques surfaces in characteristic two
by Katsura--Kond\=o--Martin in \cite{KKM}
looks rather different then Kond\=o's original classification over $\C$ in \cite{Kondo-Enriques}
(cf.\ Remark \ref{rem:KK}).
To this end, the following table lists the maximal root types $R$ together with the classical Enriques surfaces $S$
with finite automorphism group,
in the notation from \cite{KKM}, which support them.
Of course, the Enriques surfaces $S$ need not be unique
as we are merely concerned with the existence of $R$.

$$
\begin{array}{|c|c||c|c|}
\hline
R & \text{ type of } S &
R & \text{ type of } S\\
\hline
A_9 & \tilde E_8 & D_8+A_1 & \tilde E_7+\tilde A_1\\
A_8+A_1 & \tilde E_8 &
D_7+2A_1 & \tilde E_7+\tilde A_1\\
A_7+A_2 & \tilde E_7+\tilde A_1 &
D_6+A_3 & \tilde E_7+\tilde A_1\\
A_7+2A_1 & \tilde E_6+\tilde A_2 &
D_6+A_2+A_1 & \tilde D_8\\
A_6+A_2+A_1 & \tilde E_8 &
D_6+3A_1 & \tilde D_4+\tilde D_4\\
A_5+A_4 & \tilde E_8 &
D_5+A_4 & \tilde E_8\\
A_5+A_3+A_1 & \tilde E_7+\tilde A_1 &
D_5+A_3+A_1 & {\rm VIII}\\
A_5+2A_2 & \tilde E_6+\tilde A_2 &
D_5+4A_1 & {\rm VIII}\\
A_5+A_2+2A_1 & {\rm VIII}
&
D_5+D_4 & \tilde D_8\\
A_4+A_3+2A_1 & \tilde E_7+\tilde A_1
&
E_8+A_1 & \tilde E_8\\
3A_3 & \tilde E_6+\tilde A_2 &
E_7+A_2 & \tilde E_8\\
2A_3+A_2+A_1 & \tilde E_6+\tilde A_2 &
E_7+2A_1 & \tilde D_8\\
A_3+3A_2 & \tilde E_6+\tilde A_2 &
E_6+A_3 & \tilde E_8\\
A_3+6A_1 & \tilde D_4+\tilde D_4 &
E_6+A_2+A_1 & \tilde E_6+\tilde A_2\\
D_9 & \tilde E_8 &&\\
\hline
\end{array}
$$

Along the same lines, one can easily verify that the root type
$R=2A_4+A_1$ is supported on the one-dimensional family of classical and supersingular Enriques surfaces
constructed by Katsura and Kond\=o in \cite{KK}.
In total this allows us to realize 30 maximal root types on classical Enriques surfaces.

\begin{Remark}
\label{rem:ss}
Some of the above types of Enriques surfaces with finite automorphism groups
also admit supersingular realizations (thus supporting $R$), but unfortunately not all of them.
It is unclear to us whether the corresponding root types
may be realized on supersingular Enriques surfaces with infinite automorphism group.
\end{Remark}

\subsection{Maximal root types on quasi-elliptic fibrations}
\label{ss:qe}

Given a genus one fibration \eqref{eq:to} on an Enriques surface $S$,
we can characterize $S$ as a torsor over the Jacobian of \eqref{eq:to}.
However, this construction usually gives no control over the $(-2)$-curves on $S$
-- except for those contained in the fibers (which is why we had to make such an effort
to work out the maximal root types on singular Enriques surfaces in the preceding sections
(and outside characteristic $2$ in \cite{S-Q-hom}, \cite{S-Q_l})).

An exceptional case consists in elliptic surfaces with a quasi-elliptic fibration
since here the cuspidal curve naturally provides another $(-2)$-curve
(which in fact is a bisection).
Hence one can arrange for the ramification of the fibers to accommodate certain root types
--
but only on classical Enriques surfaces, since for supersingular Enriques surfaces
it is unclear how to control the underlying torsors.
We learned this approach from I. Dolgachev and C. Liedtke
who apply it to construct certain classical Enriques surfaces with finite automorphism group,
and in particular with crystallographic root lattices.

Quasi-elliptic rational surfaces have been classified by Ito in \cite{Ito} according to the configuration of reducible fibers.
There are 7 cases, but we will only need two of them, with fiber configuration
\[
\tilde D_6+2\tilde A_1, \;\;\; 2\tilde D_4.
\]
On the torsor, if a fiber is not ramified, then the cuspidal curve $C$ automatically meets
\begin{itemize}
\item
the node of $\tilde A_1$ (which has Kodaira type $III$, of course);
\item
the double component of $\tilde D_4$;
\item
the central double component of $\tilde D_6$ (by symmetry,
for instance enforced by the automorphisms induced from two-torsion sections on the Jacobian).
\end{itemize}
In comparison, ramification forces $C$ to meet some simple component transversally.
This rough information suffices to realize several maximal root types $R$.
They are indicated in the following table, together with the fiber configuration and the ramified reducible fibers (if any).

$$
\begin{array}{|c|c|c|}
\hline
R & \text{fiber configuration} & \text{ ramified reducible fibers}\\
\hline
2A_3+3A_1 & \tilde D_6+2\tilde A_1 & 2\tilde A_1\\
A_2+7A_1 & 2\tilde D_4 & \tilde D_4\\
9A_1 & 2\tilde D_4 & -\\
2D_4+A_1 & 2\tilde D_4 & 2\tilde D_4\\
D_4+A_3+2A_1 & \tilde D_6+2\tilde A_1 & \tilde D_6+\tilde A_1\\
D_4+A_2+3A_1 & 2\tilde D_4 & 2\tilde D_4\\
D_4+5A_1 & 2\tilde D_4 & \tilde D_4\\
\hline
\end{array}
$$

\subsection{Proof of existence part of Theorem \ref{thm2}}

The existence statements
of Theorem \ref{thm2}
are coverered by our findings in \ref{ss:finite} and \ref{ss:qe}
(but only for classical Enriques surfaces).

\section{Non-existence of root type $R=4A_2+A_1$}
\label{s:triple}

This section gives a unified proof of Proposition \ref{prop}
for Enriques surfaces over fields of any characteristic $\neq 3$,
thus also completing the proof of Theorem \ref{thm} and of Theorem \ref{thm2}.
We pursue an approach using triple covers in the spirit of \cite{KS}.

Assume that $S$ is an Enriques surface over an algebraically closed field $k$
of characteristic $\neq 3$
which supports the root lattice $R=4A_2+A_1$.
Let the $A_2$ summands be generated by $(-2)$-curves $C_i, C_i'$ for $i=1,\hdots,4$
(so $C_i^2=C_i'^2=-2, C_i.C_i'=1$ and all other intersection numbers are zero).
Recall from Table \ref{T1} that $R$ has primitive closure
\[
R'=E_8+A_1\subset\Num(S).
\]
This implies that, up to interchanging some $C_i, C_i'$,
the divisor
\[
D = \sum_{i=1}^3 (C_i-C_i')
\]
is $3$-divisible in $\Num(S)$
(but the root $D/3$ is neither effective nor anti-effective by \cite{S-nodal}).
By \cite{Miranda}, this induces a triple covering of $S$.
Explicitly, let $P_i=C_i\cap C_i'$ and consider the blow-up
$$\pi: \;\; \tilde S\to S$$
of $S$ in the three points $P_1, P_2, P_3$
with exceptional curves $E_i$.
Then the strict transforms $\tilde C_i, \tilde C_i'$ are $(-3)$-curves,
and the pull-back
\[
\tilde D = \pi^*D= \sum_{i=1}^3 (\tilde C_i-\tilde C_i')
\]
is $3$-divisible in $\Num(\tilde S)$.
It follows from the general theory of triple coverings \cite{Miranda}
that $\tilde D$ gives rise to a smooth triple covering
\[
\tilde Y \to \tilde S
\]
with branch locus the support of $\tilde D$.
Blowing down successively the pre-images of $\tilde C_i, \tilde C_i'$ and $E_i$
($i=1,2,3$),
we arrive at a smooth algebraic surface $Y$.
As in \cite[5.1.3]{KS}, one computes that $K_Y\equiv 0$ and $\chi(Y)=12$
(the Euler--Poincar\'e characteristic in $\ell$-adic \'etale cohomology, $\ell\neq$ char$(k)$).
By the classification of algebraic surfaces, it follows that $Y$ is an Enriques surface.

To conclude, consider the curves $C_4, C_4'$ and the generator of the orthogonal summand $A_1$ of $R$.
Then each of these curves is tripled in $\tilde Y$; more precisely its pre-image consists of three disjoint $(-2)$-curves
(since otherwise there would be a branch point or a singularity).
That is, $\tilde Y$ comes equipped with the root lattice $\tilde R=3A_2+3A_1$ supported on $(-2)$-curves
which maps down directly to $Y$.
However, $\tilde R$ does not embed into $\Num(Y)=U+E_8$.
This gives the desired contradiction and proves Proposition \ref{prop} uniformly outside characteristic $3$
(while the case of characteristic $3$ was covered among others in \cite{S-Q_l}).
\hfill $\Box$

\subsection{Conclusion}

Proposition \ref{prop} proves the final missing parts of Theorem \ref{thm}
and Theorem \ref{thm2}..
\hfill $\Box$

\section{Integral models}
\label{s:integral}

We conclude with an application to integral models of Enriques surfaces (and of K3 surfaces).
To this end, we start with the following observation combining Theorem \ref{thm} and \cite{S-Q_l}:

\begin{Corollary}
\label{cor9}
Exactly 9 of the families of Enriques surfaces supporting maximal root types
occur in every characteristic.
\end{Corollary}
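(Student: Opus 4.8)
\textbf{Proof proposal for Corollary \ref{cor9}.}

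The plan is to cross-reference the two classification results -- Theorem \ref{thm} for characteristic two and the main theorem of \cite{S-Q_l} for odd characteristic -- with the classical ($\C$) classification from \cite{S-Q-hom}, \cite{HKO}, and simply count which maximal root types survive in \emph{every} characteristic. First I would recall that over $\C$ (and in odd characteristic by \cite{S-Q_l}) the maximal root types realized on Enriques surfaces admitting a rank $9$ configuration form a known finite list, each occurring in one-dimensional families defined over the prime field. The intersection of this list with the 20 root types from Theorem \ref{thm} -- restricted to the \emph{maximal} ones, i.e.\ those not properly contained in another realized root type of rank $9$ -- is what we must identify. Running down the list in Theorem \ref{thm} and comparing with Table \ref{T1} and the odd-characteristic tables, one checks that exactly 9 maximal root types appear on all sides; these are precisely the ones whose families will later feature in Theorem \ref{thm40}, namely $E_8+A_1$, $D_9$, $E_7+A_2$, $E_6+A_3$, $A_9$, $A_8+A_1$, $A_5+A_4$, $D_5+A_4$, and $A_6+A_2+A_1$.

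The key steps, in order, would be: (1) extract from Theorem \ref{thm} the sublist of \emph{maximal} root types among the 20 (discarding those, like $A_5+2A_2$, that embed into a strictly larger realized type such as $A_7+A_2$ or $E_7+A_2$ -- this bookkeeping is already implicit in Summary \ref{sum2} and the discussion of moduli components); (2) recall the analogous sublist of maximal root types in characteristic $0$ and in odd characteristic from \cite{S-Q-hom} and \cite{S-Q_l}; (3) intersect the two sublists and verify the count is $9$, exhibiting the explicit list above; (4) confirm that for each of these $9$ the family really is "the same" across characteristics in the relevant sense -- namely that the explicit parametrization from Table \ref{T2} (or Summary \ref{sum2}) reduces mod $p$ from the characteristic-zero model -- which follows since each family has rational base and is defined over $\F_2$, hence over $\Z$, by part \textit{(iii)} of Theorem \ref{thm} together with the corresponding statement in \cite{S-Q_l}.

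The main obstacle I expect is step (1) combined with step (4): deciding which of the 20 root types of Theorem \ref{thm} count as the "families supporting maximal root types" and matching them correctly with their odd-characteristic counterparts, since the \emph{number} of moduli components and the precise degenerations differ between characteristics (as emphasized in the Remark following the proof of Theorem \ref{thm1}). One must be careful that a root type which is maximal in characteristic two might sit inside a larger one in characteristic zero, or vice versa, and that the relevant notion is the family of Enriques surfaces rather than the bare lattice-theoretic data. Once the dictionary between the tables is set up correctly, the corollary is a finite verification; no further geometry is needed beyond what Theorem \ref{thm} and \cite{S-Q_l} already provide.
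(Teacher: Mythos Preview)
Your proposal rests on a misreading of the word ``maximal'' and of what the number $9$ counts. In the paper, a \emph{maximal root type} simply means a rank $9$ root lattice (the maximal possible rank on an Enriques surface); all $20$ types in Theorem \ref{thm} are maximal in this sense, and none of them can properly embed into another since they all have the same rank. So your step (1), discarding $A_5+2A_2$ because it ``embeds into a strictly larger realized type such as $A_7+A_2$ or $E_7+A_2$'', is meaningless: these are all rank $9$ lattices. Your filtering procedure would return all $20$ types, not $9$.

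The $9$ in the corollary counts \emph{families of Enriques surfaces}, not root types. A single family can support several maximal root types (as Summary \ref{sum2} and Table \ref{T2} make explicit: e.g.\ the first family in Summary \ref{sum2} carries $E_8+A_1$, $D_8+A_1$, $A_8+A_1$ and $A_7+2A_1$ simultaneously; the $D_9$ family in Table \ref{T2} also supports $3A_3$ and $D_6+A_3$). The paper states, just after the corollary, that the $9$ families are the first two from Summary \ref{sum2} together with the first five and last four entries of Table \ref{T2}, and that these $9$ families support $16$ maximal root types in total. Your list of $9$ root types happens to coincide with one choice of representative per family (namely the labels used in Theorem \ref{thm40}), but your reasoning for why there are $9$ is wrong.

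The actual argument is a direct comparison of the explicit family-by-family classifications: Theorem \ref{thm} (characteristic $2$), \cite{S-Q_l} (odd characteristic), and \cite{S-Q-hom} (characteristic $0$). One runs through the families in Summary \ref{sum2} and Table \ref{T2} and checks, against the corresponding tables in \cite{S-Q_l}, which of them persist in every characteristic; several families degenerate in small characteristic (for instance the third family of Summary \ref{sum2} fails in characteristic $3$, as noted in the proof of Theorem \ref{thm3}), and exactly $9$ survive everywhere. No ``maximality among root types'' enters at all.
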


In detail, this amounts to the first two families from Summary \ref{sum2} and the first 5 as well as the last 4 families from
Table \ref{T2} (thus also including root type $3A_3$).
In total, there are 16 maximal root types supported on these families.
Naturally this leads to the  problems of exhibiting the families over $\Z$,
and ideally even members over $\Z$, or over integer rings of number fields of small degree.
We start by discussing the first point.

\begin{Theorem}
\label{prop:int-fam}
\label{thm3}
All 9 families above admit models over $\Z$,
i.e.~they may be parametrized by a univariate polynomial ring over $\Z$.
\end{Theorem}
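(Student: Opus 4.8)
The plan is to revisit, family by family, the explicit parametrizations recorded in Table \ref{T2} and in Summary \ref{sum2}, and to check that each can be written down with coefficients in $\Z$ rather than merely in the algebraically closed ground field $K$. Concretely: for the two families from Summary \ref{sum2} the K3 cover $X$ is obtained from the extremal rational elliptic surface $\Jac(f)\in\{X_{321},X_{6321}\}$ by the quadratic base change \eqref{eq:bcn}, $t\mapsto \lambda s^2/(s-1)$; since $X_{321}$ and $X_{6321}$ already have Weierstrass equations with coefficients in $\Z[t]$ (Table \ref{T0}), the base-changed surface has a Weierstrass model over $\Z[\lambda][s]$ after clearing denominators, and the Enriques quotient is then cut out over the same ring. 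For the seven families coming from Table \ref{T2} (the first five and the last four rows), one starts instead from the quadratic twist $X'$ given by \eqref{eq:X'}, whose coefficients are polynomials in $t$ with coefficients the $a_i$ of $\Jac(f)$, twisted by the M\"obius parameters $\mu,\lambda$ of \eqref{eq:ml}; the section $P'=(U',V')$ is the one displayed in Table \ref{T2}, and the base parameter is constrained by a polynomial relation (such as the curve $g_0^2\mu^3+g_0\mu^2(\mu+1)+(\mu+1)^2=0$ in the $A_5+2A_2$ case), which in each instance is rational and admits an explicit rational parametrization by a single variable $u$.

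The key steps, in order, are: (1) for each of the 9 families fix the relevant Weierstrass model of $\Jac(f)$ from Table \ref{T0}, observing that all eight surfaces listed there are defined over $\Z$ (indeed over $\F_2$, but the equations lift verbatim to $\Z$); (2) perform the scaling/M\"obius transformation \eqref{eq:ml} and the quadratic base change \eqref{eq:bc} or \eqref{eq:bcn} symbolically, keeping $\mu,\lambda$ (or $\lambda$ alone) as indeterminates, and clear denominators to land in a polynomial ring over $\Z$; (3) impose the section condition ($P'$ integral, meeting the prescribed fibers non-trivially, and Observation \ref{obs:U'}) and solve the resulting polynomial system, checking that the solution variety is a rational curve whose parametrization can be chosen with $\Z$-coefficients — this is where one reads off, e.g., $g_0=u(\mu+1)\mu$, $\mu=1/(u^2+u)$ for $A_5+2A_2$, all of which manifestly define morphisms of $\Z$-schemes; (4) take the quotient by the Enriques involution $\tau=t_P\circ\imath$ of Proposition \ref{prop:imath}, which is defined over $\Z[\tfrac12]$ a priori but, since the whole construction specializes to the characteristic-two picture worked out in the earlier sections, extends to a family of Enriques surfaces over all of $\Z$; conclude that the total space is flat over $\Z$ away from a finite set of bad primes, and then absorb those primes by rescaling coordinates (there is enough freedom in $u$ and in the Weierstrass coordinates to do so).

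The main obstacle I expect is step (3)–(4): controlling denominators. The naive parametrizations in Table \ref{T2} contain rational functions of $\mu,\lambda,u$ with denominators like $\mu^2$, $(u+1)^2u^2$, $(u^2+u+1)^2$, and the quotient construction introduces the usual inversion of $2$; one must verify that after a judicious change of the free parameter and of the Weierstrass coordinates these denominators can all be cleared \emph{simultaneously}, i.e.\ that the family really extends across every prime of $\Z$ (not just every prime outside a fixed finite set), and in particular that the special fibre in characteristic two is exactly the family of singular Enriques surfaces classified in Theorem \ref{thm1}. This is a finite check, carried out case by case, but it is the part where the bookkeeping is delicate; everything else is a routine, if lengthy, manipulation of explicit Weierstrass equations. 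One subtlety worth flagging is that ``model over $\Z$'' here means a $\Z$-flat family in a single polynomial parameter whose geometric fibres over all residue fields (including $\F_2$) are the Enriques surfaces in question, rather than a smooth proper $\Z$-scheme; with that reading the statement follows directly from the explicit nature of all the equations involved.
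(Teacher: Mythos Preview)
Your overall strategy (explicit construction, case by case) matches the paper's, but there is a genuine gap in step~(1) which then propagates through the rest. You assert that the Weierstrass equations in Table~\ref{T0} ``lift verbatim to $\Z$''. They do have integer coefficients, but that is not the same as giving an integral model of the desired extremal rational elliptic surface: those equations were written down \emph{in characteristic two}, and their reductions at other primes need not have the right fibre types. For instance, the Table~\ref{T0} model of $X_{6321}$, read over $\Q$, has discriminant $t^6(1-27t^2)$ and configuration $I_6+IV+2I_1$ rather than $I_6+I_3+I_2+I_1$; this is why the paper supplies a \emph{different} integral model $y^2+(2t-1)xy-t(t-1)y=x^3+tx^2$ and likewise writes down a new $\mu$-deformed model of $X_{8211}$. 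So the starting input for your steps~(2)--(3) is already off.

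The more serious consequence concerns the non-torsion families. The parametrizations in Table~\ref{T2} (and equations such as $g_0^2\mu^3+g_0\mu^2(\mu+1)+(\mu+1)^2=0$, which incidentally belongs to the $A_5+2A_2$ component \emph{not} among the nine families) were obtained by solving the section condition \emph{in characteristic two}; there is no reason the resulting rational curve, or its chosen parametrization, should lift to the section locus over $\Z$. The paper's point is precisely that one must \emph{interpolate} between the characteristic-two calculations here and the odd-characteristic calculations in \cite{S-Q_l}: for each family one exhibits a single section $P$ with coordinates in $\Z[\mu,s]$ (e.g.\ $P=(s(s+1)(\mu s^2+\mu s+1),\,s^2(s+1)(\mu s^2+\mu s+1))$ for $A_9$) that specializes correctly in every characteristic. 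Once such a $P$ is in hand, $\tau=t_P\circ\imath$ is defined over $\Z$ on the nose and your step~(4) becomes unnecessary; conversely, without it your extension argument from $\Z[\tfrac12]$ to $\Z$ has no content, since you have not yet produced a section valid away from $p=2$.
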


\begin{Remark}
For two of the families, integral models also appear in \cite{Martin}.
\end{Remark}

\begin{proof}
The proof of Theorem \ref{prop:int-fam} can be achieved by explicit construction.
The key property which makes all of this work,
is that many rational elliptic surfaces admit models over $\Z$;
here the fiber types may degenerate, but the underlying Dynkin types may not.
For instance, among the extremal rational elliptic surfaces from Table \ref{T0},
only $X_{3333}$ does not admit an integral model (it degenerates in characteristic $3$
due to the full $3$-torsion).

Given an isotropic vector $E$ from Table \ref{T1}
such that the Jacobian of $|2E|$ has an integral model,
it thus remains to set up the base change construction over $\Z$.
We achieve this by interpolating between the models from this paper and from \cite{S-Q_l}.
As before, we distinguish two cases.

If the configuration determines the section $P$ to be two-torsion (as in \ref{ss:2t}),
then the appropriate base change is immediate:
For the first family from Summary \ref{sum2},
the Jacobian $X_{321}$ has integral model exactly as in Table \ref{T0}
(with $I_1$ fiber at $t=1/64$ outside characteristic $2$).
Hence the base change from \eqref{eq:bcn} suffices perfectly.
For the second family, the same applies to the integral model of $X_{6321}$
given by
\[
X_{6321}: \;\;\;
y^2 + (2t-1)xy - t(t-1) y = x^3 + tx^2.
\]
This has singular fibers of type $I_6$ at $t=0$, $I_3$ at $\infty$,
$I_2$ at $t=1$ and $I_1$ at $t=-1/8$,
except that
$I_3$ and $I_1$ degenerate to type $IV$ in characteristic $2$
and
$I_2$ and $I_1$ degenerate to type $III$ in characteristic $3$
(which is why the third family from Summary \ref{sum2} does not admit an integral model).

The non-torsion case is a little more complicated to handle
since now the section $P$ depends on the very base change
-- not only its equations, but also the existence of $P$.
For shortness, we concentrate on a single family, say
the configuration of type $A_9$.
By Table \ref{T1}, we ought to start with an integral model of $X_{8211}$,
but following the approach laid out in  \ref{ss:non},
we  squeeze in an (a priori superfluous) parameter $\mu\in K^\times$
so that we can take the fixed quadratic base change from \eqref{eq:bc}:
\[
X_{8211}: \;\;\;
y^2 +(\mu t+1)xy  -\mu(\mu t+1) t^2 y
=
x^3 - \mu t^2 x^2.
\]
This has  singular fibers of Kodaira types $ I_8$ at $t=0$, $ I_2$ at $t=-1/\mu$,
and $ I_1$ at the zeroes of
$(\mu^2+16\mu) t^2+2\mu t+1$,
except in characteristic two where the last 3 fibers degenerate to a single ${III}$ fiber.
The  K3 surfaces $Y$ resulting from the base change \eqref{eq:bc} are endowed with the section
\[
P = (s(s+1)(\mu s^2+\mu s+1), \;s^2(s+1)(\mu s^2+\mu s+1))
\]
which is anti-invariant for the deck transformation $s\mapsto -1-s$.
Hence $\tau$ indeed defines the required fixed point free involution on $Y$
(compatible with the families from Table \ref{T2} and from \cite{S-Q_l}).

The remaining 6 families work similarly;
the details are left to the reader.
\end{proof}

Turning to the second problem alluded to above,
we emphasize that it is unclear whether Enriques surfaces may admit models over $\Z$
(although there has been substantial progress on the problem
achieved recently by Liedtke and Martin).
Indeed we will fall short of exhibiting models over $\Z$,
but our approach works over small number fields:

\begin{Theorem}[= Theorem \ref{thm40}]
\label{thm4}
Within the 9 families, there are  Enriques surfaces over the integer rings of number fields
of  degree $d_0$ as follows:
$$
\begin{array}{c||c|c|c|c|c|c}
R & E_8+A_1 & E_7+A_2  & E_6+A_3 & A_8+A_1& A_5+A_4 & A_6+A_2+A_1     \\
& D_9  && A_9 & & D_5+A_4\\
\hline
d_0 & 2 & 3 & 4 & 6 & 7 & 9\\
\end{array}
$$
\end{Theorem}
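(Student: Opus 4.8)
The plan is to deduce Theorem~\ref{thm4} from Theorem~\ref{prop:int-fam} by specialising the integral models at suitable algebraic-integer parameter values. Each of the $9$ families of Corollary~\ref{cor9} carries, by the construction in the proof of Theorem~\ref{prop:int-fam}, an integral model: a smooth proper morphism $\pi\colon\mathcal S\to\mathcal U$ over an open subscheme $\mathcal U\subset\A^1_\Z$ whose geometric fibres are Enriques surfaces carrying a configuration of $(-2)$-curves of the prescribed type $R$, together with its universal K3 cover $\mathcal X\to\mathcal U$ as an integral family as well. Since the family has good reduction in every characteristic (Corollary~\ref{cor9}), its bad locus does not contain any fibre $\A^1_{\F_p}$, so $\mathcal U=\A^1_\Z\setminus V(\Delta_R)$ for a \emph{primitive} polynomial $\Delta_R\in\Z[u]$; after a M\"obius reparametrisation of $u$, permissible because the base of the family is rational over $\Z$, we may moreover normalise $\Delta_R$ to be monic. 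An Enriques surface over the ring of integers $\OO_K$ of a number field $K$ belonging to the family is then the same thing as an $\OO_K$-point of $\mathcal U$, i.e.\ an element $u_0\in\OO_K$ with $\Delta_R(u_0)\in\OO_K^\times$; the resulting surface supports $R$ automatically (the curves spread out over $\OO_K$) and sits under an integral K3 surface.

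Thus everything comes down to producing, in each family, an algebraic integer $u_0$ of small degree at which $\Delta_R$ takes a unit value. The most economical device is to impose $\Delta_R(u)=\pm 1$: as $\Delta_R$ is monic, both $\Delta_R(u)\mp 1$ are monic, hence any irreducible factor $m(u)\in\Z[u]$ of either one has an algebraic-integer root $u_0$, and then $K=\Q(u_0)=\Q[u]/(m)$ satisfies $\Delta_R(u_0)=\pm 1\in\OO_K^\times$, giving $[K:\Q]\le\deg m$. So the proof is carried out family by family: one writes down the normalised bad polynomial $\Delta_R$ from the explicit integral equations of Theorem~\ref{prop:int-fam}, factors $\Delta_R(u)-1$ and $\Delta_R(u)+1$ over $\Z$, and takes $u_0$ to be a root of an irreducible factor of minimal degree; this returns the values $d_0=2,2,3,4,6,6,7,9,9$ of the table, with $K$ the corresponding field (and with real quadratic fields in the two degree~$2$ cases, as announced in the introduction). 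For the two families possessing a two-torsion section $P$ (those of Summary~\ref{sum2}) the base change \eqref{eq:bcn} is canonical and $\Delta_R$ is essentially a product of the resultants governing collisions of singular fibres; for the remaining seven non-torsion families one works from the Weierstrass form \eqref{eq:X'} of the quadratic twist, the constraint coupling $\lambda$ to the running parameter -- equivalently, the condition that the section $P'$ exist -- being exactly what the one-parameter integral model already encodes, so that $\Delta_R$ is again a single polynomial in one variable.

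Two points need attention. First, $\Delta_R$ must be pinned down correctly modulo $2$: the characteristic-$2$ fibre of $\mathcal S$ over $u_0$ is a \emph{singular} Enriques surface whose fibre types degenerate as in Table~\ref{T0}, and smoothness of $\pi$ there -- equivalently, $\tau$ remaining fixed point free, via Proposition~\ref{prop:imath} and Corollary~\ref{cor:RES} -- is precisely what the base-change analysis of Sections~\ref{s:bc}--\ref{s:twist} secures; once $\Delta_R(u_0)=\pm 1$ this is automatic, since $\pm 1$ is a unit at every prime, so no further reduction check is needed. Second, the discriminants of the extremal rational elliptic surfaces of Table~\ref{T0} have leading coefficients divisible by powers of $2$ (and of $3$), so the naive parameter need not make $\Delta_R$ monic; one either arranges the M\"obius reparametrisation to clear this, or solves $\Delta_R(u_0)=c$ for a suitable nonzero integer unit $c$ absorbing the leading coefficient, and it is exactly this $2$- and $3$-adic arithmetic that prevents $d_0$ from collapsing to~$1$ and produces the listed degrees.

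The main obstacle is therefore computational rather than structural: for each of the $9$ families one must execute the base-change and twist constructions of Sections~\ref{s:bc}--\ref{s:twist} over $\Z$ explicitly enough to extract $\Delta_R$ in a usable monic form, and then verify that the factorisation of $\Delta_R\pm 1$ (or $\Delta_R-c$) genuinely yields an irreducible factor of the claimed degree $d_0$ with integral root -- a verification made delicate, as flagged in the Remarks following Theorem~\ref{thm1}, by the unreliability of computer algebra in characteristic two. Finally, the density statement of Corollary~\ref{cor:dense} drops out of the same framework: once $[K:\Q]>d_0$ the unit group $\OO_K^\times$ has positive rank, so $\Delta_R$ attains unit values at a set of $u_0\in\OO_K$ that is Zariski dense and can be taken to approximate any prescribed archimedean position, whence the corresponding integral Enriques surfaces lie dense in the complex moduli spaces.
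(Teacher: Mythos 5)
Your proposal follows essentially the same route as the paper: starting from the $\Z$-models of Theorem~\ref{thm3}, you encode non-degeneracy (the ramification points of the quadratic base change missing the singular fibres, $\tau$ staying fixed point free) as the condition that a monic polynomial in the parameter take unit values, and you force this by setting that polynomial equal to $\pm 1$ — which is exactly the paper's equations \eqref{eq256}, \eqref{eq2nd}, \eqref{eq3rd}, whose degrees produce $d_0$. The only slip is bookkeeping: the degrees attached to the nine families are $2,2,3,4,4,6,7,7,9$ as in the table, not $2,2,3,4,6,6,7,9,9$; this does not affect the method.
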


\begin{proof}
Since our Enriques surfaces live in families over $\Z$,
it suffices to control the ramification of the base change in order to prevent
\begin{itemize}
\item
the singular fibers from degenerating (mostly from becoming multiple), 
\item
the involution $\tau$ from attaining fixed points.
\end{itemize}
In the $2$-torsion cases, there is a multiple singular fiber at zero anyway,
the other ramification point of the base change \eqref{eq:bcn} being $4\lambda$ (outside characteristic two).
We shall ensure that this point does not hit any singular fiber.
For the first family from Summary \ref{sum2}, this translates as
\[
 4 \lambda \neq 0, \frac 1{64}
\]
at every place coprime to $2$ simultaneously.
That is, both $\lambda$ and $256\lambda-1$ are units in every local ring
(including those above $2$).
Globally, there is a strikingly simple solution:
Just postulate that
\begin{eqnarray}
\label{eq256}
\lambda = \frac 1\mu \;\;\; \text{ and } \;\;\;
\mu(\mu-256)=\pm 1,
\end{eqnarray}
pinning down $\lambda$ uniquely up to conjugation in one out of two real quadratic fields.
For the second family from Summary \ref{sum2}, the analogous reasoning leads to the claimed
degree three extensions of $\Q$ given by
\begin{eqnarray}
\label{eq2nd}
\mu(\mu-4)(\mu+32)= \pm 1 \;\;\;\;\;\; (\lambda=1/\mu).
\end{eqnarray}

We continue by discussing the $A_9$ configuration with integral model
exhibited in the proof of Theorem \ref{thm3}.
The base change \eqref{eq:bc} is ramified at $\infty$ and $-1/4$ (outside characteristic two).
The fiber at $\infty$ is smooth for $\mu\neq 0, -16$
while the fiber at $-1/4$ degenerates exactly when $\mu=\pm 4$.
As before, this leads to integer units in the degree 4 extensions of $\Q$ encoded in
\begin{eqnarray}
\label{eq3rd}
\mu(\mu+4)(\mu-4)(\mu+16)=\pm 1.
\end{eqnarray}
All other families work similarly and are thus omitted for brevity.
\end{proof}

\begin{Remark}
One can also verify that the degrees in Theorem \ref{thm4} are optimal, depending on the given root type.
\end{Remark}

\begin{Remark}
Incidentally, we also obtain integral models of the K3 covers. Their arithmetic will be studied elsewhere.
\end{Remark}

We conclude this paper with a little observation
concerning integral points inside the moduli spaces.

\begin{Corollary}
\label{cor:dense}
Pick one of the 9 families from Corollary \ref{cor9}
and let $d_0$ as in Theorem \ref{thm4}.
For fixed $d>d_0$, the Enriques surfaces over the integer rings of all degree $d$ number fields
lie dense inside the family.
\end{Corollary}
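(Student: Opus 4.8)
The plan is to reduce the assertion to a density statement for the parameters of the family, and then to supply that density through a resultant-one construction.

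First I would recall, from the proofs of Theorems \ref{thm3} and \ref{thm4}, that the family in question is parametrised over $\Z$: there is an open subscheme $\operatorname{Spec}\Z[\mu,F_R(\mu)^{-1}]\subset\A^1_\Z$ and a fixed monic polynomial $F_R\in\Z[\mu]$ -- the product of the linear forms locating the fibres used in the base change, together with the extra factors guaranteeing that $\tau$ stays fixed point free -- such that any $\mu_0\in\OO_K$ with $F_R(\mu_0)\in\OO_K^\times$ produces, via the base change construction of Sections \ref{s:bc}--\ref{s:twist}, an Enriques surface of the prescribed root type over all of $\operatorname{Spec}\OO_K$. By the proof of Theorem \ref{thm4} the least degree in which such a $\mu_0$ exists is $d_0=\deg F_R$, realised by the roots of $F_R\mp 1$. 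Since $\mu\mapsto[S_\mu]$ is a non-constant morphism from the complex parameter curve to the moduli space whose image is the family in question, it is enough to prove that for fixed $d>d_0$ the set
\[
\bigl\{\,\mu_0\in\C : [\Q(\mu_0):\Q]=d,\ \mu_0\in\OO_{\Q(\mu_0)},\ F_R(\mu_0)\in\OO_{\Q(\mu_0)}^\times\,\bigr\}
\]
is dense in the complex parameter curve.

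To produce such $\mu_0$ I would use resultant-one polynomials. For a monic $Q\in\Z[x]$ of degree $d-d_0$ -- which is $\ge 1$ by hypothesis -- set $G:=F_R\cdot Q+1$, monic of degree $d$; then $G\equiv 1\pmod{F_R}$, so $\operatorname{Res}(G,F_R)=\pm 1$. Choosing $Q$ so that in addition $G$ is irreducible over $\Q$ -- which one can arrange, for instance by forcing $G$ to be irreducible modulo a suitable auxiliary prime coprime to $\disc F_R$ -- every root $\alpha$ of $G$ generates a degree-$d$ field $K$, is an algebraic integer, and satisfies $N_{K/\Q}(F_R(\alpha))=\pm\operatorname{Res}(G,F_R)=\pm 1$, hence $F_R(\alpha)\in\OO_K^\times$. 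Letting $Q$ range over monic polynomials of degree $d-d_0$ already yields infinitely many pairwise distinct such $\mu_0=\alpha$, and hence infinitely many Enriques surfaces over degree-$d$ integer rings; this gives density in the Zariski sense. To strengthen it to density in the analytic topology one refines the choice of $Q$: one takes $Q$ with a root near the target value $z_0$ and with large coefficients, so that the perturbation $G-F_RQ=1$ displaces that root by less than any preassigned $\varepsilon$, while the remaining conjugates of $\alpha$ stay in a bounded region in which the unit condition can still be met.

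The main obstacle is exactly this analytic refinement. Demanding one complex conjugate of $\mu_0$ close to $z_0$ while keeping $F_R(\mu_0)$ a \emph{global} unit couples all the archimedean places of $\Q(\mu_0)$ through the product formula, and it is here that the strict inequality $d>d_0=\deg F_R$ has to be exploited; in effect one needs the density in $\C$ of algebraic integers of suitably bounded degree with a prescribed (unit) behaviour at a fixed finite set of places. The only other input is the routine verification flagged in the first step: that the scheme-theoretic condition $F_R(\mu_0)\in\OO_K^\times$ really is equivalent to the base-change model being an Enriques surface over every point of $\operatorname{Spec}\OO_K$ -- in particular over the primes above $2$, where the singular fibres of the pencil degenerate in Kodaira type but retain their Dynkin type, so that the configuration $R$ persists.
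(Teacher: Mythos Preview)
Your core construction is exactly the paper's: writing $G=F_R\cdot Q\pm 1$ with $Q$ monic of degree $d-d_0$ and taking roots of an irreducible such $G$ is precisely ``multiplying the left-hand side of \eqref{eq256}, \eqref{eq2nd}, \eqref{eq3rd} by a monic $g\in\Z[\mu]$ of degree $d-d_0$''. The paper invokes Hilbert irreducibility where you use reduction modulo an auxiliary prime; either device works.

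The point where you diverge is the meaning of ``dense''. The paper is content with Zariski density: the complex family is one-dimensional, so infinitely many distinct parameter values already suffice, and the proof ends there. Your entire final paragraph --- the analytic refinement, the coupling of archimedean places through the product formula, the worry about placing one conjugate near a prescribed $z_0$ --- is addressing a stronger statement than the corollary asserts. The ``main obstacle'' you flag is therefore not an obstacle at all for what is being proved; you had already finished once you observed that infinitely many admissible $\mu_0$ arise and that the moduli are one-dimensional.
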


\begin{proof}
Nothing prevents us from multiplying the left-hand side of
the central equations  \eqref{eq256}, \eqref{eq2nd}, \eqref{eq3rd} by some monic  polynomial $g\in\Z[\mu]$
of degree $d-d_0$.
By Hilbert's irreducibility theorem, there will be infinitely many $g\in\Z[\mu]$ such that the resulting polynomial is irreducible,
each providing a Galois orbit of Enriques surfaces over the integer rings of degree $d$ number fields.
Since the complex moduli of our families are one-dimensional by \cite{S-Q-hom}
(and in fact in any characteristic by Theorem \ref{thm} and \cite{S-Q_l}),
this suffices to prove the density.
\end{proof}

\subsection*{Acknowledgements}

Many thanks to Igor Dolgachev,
Toshiyuki Katsura,
Christian Liedtke and Gebhard Martin for  helpful comments
and sharing their insights.
I'm grateful to the referee for her/his suggestions
improving the paper.

\bibliographymark{References}

\providecommand{\bysame}{\leavevmode\hbox to3em{\hrulefill}\thinspace}
\providecommand{\arXiv}[2][]{\href{https://arxiv.org/abs/#2}{arXiv:#1#2}}
\providecommand{\MR}{\relax\ifhmode\unskip\space\fi MR }
\providecommand{\MRhref}[2]{%
  \href{http://www.ams.org/mathscinet-getitem?mr=#1}{#2}
}
\providecommand{\href}[2]{#2}

\end{document}